\documentclass[pdflatex,sn-mathphys-num]{sn-jnl}

\usepackage{graphicx}%
\usepackage{multirow}%
\usepackage{amsmath,amssymb,amsfonts}%
\usepackage{amsthm}%
\usepackage{mathrsfs}%
\usepackage[title]{appendix}%
\usepackage{xcolor}%
\usepackage{textcomp}%
\usepackage{manyfoot}%
\usepackage{booktabs}%
\usepackage{algorithm}%
\usepackage{algorithmicx}%
\usepackage{algpseudocode}%
\usepackage{listings}%
\usepackage{tikz}
\usetikzlibrary{positioning, arrows.meta, shapes.geometric, calc, fit, backgrounds}

\newcommand{\R}{\mathbb{R}}
\newcommand{\Rn}{\mathbb{R}^n}
\newcommand{\Uad}{U_{\mathrm{ad}}}
\newcommand{\Xad}{X_{\mathrm{ad}}}
\newcommand{\Sop}{\mathcal{S}}
\newcommand{\Aop}{\mathcal{A}}
\newcommand{\Bop}{\mathcal{B}}
\newcommand{\Eop}{\mathcal{E}}
\newcommand{\Gop}{\mathcal{G}}
\newcommand{\Lc}{\mathcal{L}}
\newcommand{\Wsp}{W}
\newcommand{\inner}[2]{\langle #1,#2\rangle}
\newcommand{\norm}[1]{\lVert #1\rVert}

\DeclareMathOperator{\proj}{proj}
\DeclareMathOperator{\aff}{aff}

\theoremstyle{thmstyleone}
\newtheorem{theorem}{Theorem}
\newtheorem{lemma}[theorem]{Lemma}
\newtheorem{corollary}[theorem]{Corollary}
\newtheorem{proposition}[theorem]{Proposition}%
\newtheorem{assumption}{Assumption}

\theoremstyle{thmstyletwo}%
\newtheorem{example}{Example}%
\newtheorem{remark}{Remark}%

\theoremstyle{thmstylethree}%
\newtheorem{definition}{Definition}%

\begin{document}

\title[Inverse Optimal Control with Convex Features and Box Constraints]
{Inverse Optimal Control with Convex Features and Box Constraints}

\author[1]{\fnm{Jiguang} \sur{Yu}}\email{jyu678@bu.edu}
\equalcont{These authors contributed equally to this work.}

\author*[2]{\fnm{Louis Shuo} \sur{Wang}}\email{wang.s41@northeastern.edu}
\equalcont{These authors contributed equally to this work.}

\affil[1]{\orgdiv{College of Engineering}, \orgname{Boston University}, \orgaddress{\city{Boston}, \postcode{02215}, \state{MA}, \country{USA}}}

\affil*[2]{\orgdiv{Department of Mathematics}, \orgname{Northeastern University}, \orgaddress{\city{Boston}, \postcode{02115}, \state{MA}, \country{USA}}}

\abstract{We study an inverse optimal-control problem for identifying nonnegative weights in a
convex-feature lower-level objective. The resulting model is an optimistic bilevel
optimal-control problem whose lower level is a strongly convex, linearly constrained
control problem with box constraints in \(L^2\). For the reduced response map
\(x\mapsto (y_x,u_x)\), we prove global single-valuedness and an explicit Lipschitz
estimate, without assuming differentiability of active sets. Exploiting the affine
dependence of the lower-level functional on \(x\), we show that the value function is
concave and Lipschitz and derive an asymptotically exact optimal-value relaxation.
We further establish local identifiability and \(O(\delta)\) noise stability by a
gradient-free argument based on second-order growth. Finally, a KKT reformulation
yields a function-space MPCC; Scholtes relaxation gives C-stationarity under uniform
multiplier boundedness. A decoupled scalar example provides an explicit
strict-complementarity certificate via finiteness of threshold contacts.}

\keywords{inverse optimal control, bilevel optimal control,
parameterized convex features, value-function reformulation, mathematical programs
with complementarity constraints}

\pacs[MSC Classification]{49N45, 90C33, 49K20, 65K10, 49M20}

\maketitle

\section{Introduction and Problem Formulation}
\label{sec:intro}

\subsection{Motivation}
\label{subsec:motivation}

In many applications one observes the optimal behaviour of a controlled system but
does not know the objective that the system is optimizing. Inverse optimal
control -- equivalently, parameter learning for variational and optimal-control
models -- seeks to recover that objective, or the weights of its competing terms,
from measured state and control trajectories \cite{ren2026inverse,deng2001stabilization,do2015global,wang2025analysis}. The problem is now classical in
robotics and biomechanics, where the weights of a parametrized locomotion cost are
estimated from motion-capture data \cite{mombaur2010human,hatz2012estimating,albrecht2012bilevel,wang2025analysis1}, and it
is central to data-driven variational modelling in imaging, where regularization
operators and weights are learned from training pairs
\cite{kunisch2013bilevel,de2013image,de2017bilevel,liang2025global}.
In every case the learned model is itself an optimization problem, so the
identification task is genuinely hierarchical: an upper-level data-misfit is
minimized subject to the constraint that the lower-level state and control solve a
parametrized optimal control problem. This is precisely an optimistic bilevel
program \cite{dempe2002foundations,zemkohoo2020bilevel,wang2026algebraic,bard2013practical,dempe2020bilevel,sinha2017review}.

The present paper treats a structured instance of this problem that arises when the
lower-level objective is a nonnegative combination of parameterized convex
features $j_1,\dots,j_n$ -- which we also refer to, following the application
interpretation, as competition features. Let $x\in\Xad\subset\R^n_+$ denote the
unknown vector of feature weights. For each $x$, the lower-level problem is a strongly
convex constrained optimal control problem of the form
\[
  \min_{(y,u)}
  \left\{
    x\cdot j(y)+\frac{\sigma}{2}\norm{u}_U^2
  \right\}
  \quad\text{subject to the linear state equation and } u\in\Uad.
\]
The inverse problem asks for a parameter $x$ whose optimal lower-level response
matches observed data; this yields an optimistic bilevel optimal control problem in
which the lower-level solution map is generally nonsmooth because of box constraints
on the control.

The mathematical questions we address are: existence and stability of the
lower-level response; structure of the lower-level value function; exactness of the
value-function relaxation; identifiability and noise stability of the inverse
solution; and stationarity of local minimizers after a complementarity
reformulation.

\subsection{Related work and positioning}
\label{subsec:related}

\paragraph{Bilevel optimization and the value-function reformulation.}
Optimality conditions for bilevel programs are notoriously delicate because standard
constraint qualifications fail at every feasible point of the natural
single-level reformulations. Two reformulations dominate the literature. The
optimal-value reformulation replaces the lower-level problem by the scalar
constraint $f(x,y,u)\le\varphi(x)$, where $\varphi$ is the lower-level value
function; this approach was developed by Ye and Zhu, who identified calmness as the
appropriate qualification \cite{ye1995optimality,ye2010new,wang2026damage}. The KKT reformulation
replaces the lower level by its first-order system and produces a mathematical
program with complementarity (or equilibrium) constraints
\cite{luo1996mathematical,outrata2013nonsmooth,dempe2002foundations,wang2026breakdown}. We use the value-function
form for existence and exact-relaxation arguments, where its concavity structure is
an asset, and the KKT form for stationarity, where pointwise complementarity is
natural; the comparison after \eqref{eq:OVR} explains why we do not derive
stationarity from the value-function form.

\paragraph{MPCCs and the stationarity hierarchy.}
Programs with complementarity constraints possess a hierarchy of stationarity
notions -- weak, Clarke (C-), Mordukhovich (M-), and strong (S-) stationarity --
introduced and analysed by Scheel and Scholtes \cite{scheel2000mathematical} and
Scholtes \cite{scholtes2001convergence}, with a systematic comparison of relaxation methods by
Hoheisel, Kanzow and Schwartz \cite{hoheisel2013theoretical}. A robust feature of Scholtes-type
relaxation is that, without additional structure, multiplier limits satisfy only the
Clarke sign condition on the biactive set; M- or S-stationarity require stronger
qualifications. Our Theorem~\ref{thm:cstationarity} reflects this: we obtain
C-stationarity under a uniform multiplier bound, and we are explicit that strong
stationarity is not available in general.

\paragraph{Complementarity and bilevel problems in function space.}
The transfer of the finite-dimensional MPCC theory to function space is subtle
because pointwise-defined complementarity sets are typically non-polyhedric and the
relevant normal cones are not weakly closed in $L^2$. The variational geometry of
such sets, and the resulting weak/M-stationarity systems, were clarified by Mehlitz
and Wachsmuth \cite{mehlitz2018limiting,mehlitz2016weak} and Harder and
Wachsmuth \cite{harder2022m}; elliptic MPECs with control constraints were
treated by Hinterm\"uller, Mordukhovich and Surowiec \cite{hintermuller2014several}; control
complementarity constraints and their penalization were studied by Deng, Mehlitz and
Pr\"ufert \cite{deng2021coupled}. Our function-space C-stationarity result
is stated in the pointwise, almost-everywhere form natural for box control
constraints and uses exactly this circle of ideas to pass to the limit.

\paragraph{Inverse problems and regularization.}
The identification viewpoint connects the bilevel problem to the theory of inverse
problems: existence of inverse solutions, identifiability, and stability under noise
in the spirit of regularization theory \cite{engl1996regularization,ito2014inverse,holler2018bilevel,yu2026rigorous}. The
PDE-constrained and optimal-control background is standard
\cite{troltzsch2010optimal,hinze2008optimization,alexanderian2026computational,yu2026beyond}.

\paragraph{Contribution relative to the literature.}
Existing inverse-optimal-control work is largely either finite-dimensional /
discrete-time \cite{johnson2013inverse,ab2020inverse,chan2025inverse,hatz2012estimating,mombaur2010human,zhang2022statistically,yu2026microscopic} or focuses on imaging variational models
with elliptic lower levels \cite{kunisch2013bilevel,de2013image,cai2026optimal}. The
function-space MPCC analyses \cite{mehlitz2016weak,harder2022m,harder2018limiting,harder2020bilevel,wang2026elliptic} are
general but do not specialize the constants or the strict-complementarity structure
to a constrained LQ setting. The specific contribution here is not the introduction
of a new MPCC stationarity concept, but the combination, for a parameterized convex
constrained optimal-control lower level, of: (i) a global, explicit Lipschitz
estimate for the parameter-to-response map; (ii) a concave value-function relaxation
that is exact in the limit; (iii) inverse identifiability and $O(\delta)$ noise
stability established without differentiability of the response; and (iv) a
verifiable scalar strict-complementarity certificate for the MPCC qualification
underlying C-stationarity.

\subsection{Contributions}
\label{subsec:contributions}
\begin{enumerate}[label=\textup{(C\arabic*)}]
\item \textbf{Lower-level well-posedness and stability.}
For bounded box controls, convex $C^2$ features, and $\sigma>0$, we
prove existence and uniqueness of the lower-level response and Lipschitz continuity
of the response map $x\mapsto(y_x,u_x)$, with an explicit constant. We do not claim
$C^1$-smoothness in the constrained case; smoothness is asserted only under
additional active-set stability assumptions or in unconstrained/quadratic special
cases.

\item \textbf{Value-function structure.}
We prove that the lower-level value function $\varphi$ is concave and Lipschitz on
$\Xad$, and that $j(y_x)$ is a concave supergradient. This justifies the
optimal-value reformulation and its $\varepsilon$-relaxation.

\item \textbf{Exact relaxation and inverse stability.}
We prove that global minimizers of the relaxed value-function problems converge,
along subsequences, to global minimizers of the original bilevel problem as
$\varepsilon\downarrow0$. We add local identifiability under an injective sensitivity
condition, stated for the data metric actually present in the objective, and
$O(\delta)$ stability under noisy observations under a second-order growth condition,
proved by a gradient-free argument.

\item \textbf{MPCC stationarity.}
We replace the lower-level problem by its KKT system -- which is equivalent to the
bilevel problem because the lower level is convex -- express the box normal cone as
pointwise complementarity, and apply a Scholtes-type relaxation. Under a uniform
boundedness condition on the relaxed multipliers (which would follow from a
function-space MPCC--MFCQ), the limits of relaxed local minimizers are C-stationary.
The framework follows \cite{scheel2000mathematical,scholtes2001convergence,hoheisel2013theoretical,liu2025bidirectional} and its
function-space counterparts \cite{mehlitz2018limiting,hintermuller2014several,liang2026separation}.

\item \textbf{Quadratic and scalar specializations.}
We include a self-contained quadratic specialization with explicit Lipschitz
constants, define the two-agent competition model used throughout, and prove a
decoupled scalar strict-complementarity theorem: when the coupling weight vanishes,
each scalar agent has finitely many threshold contacts and strict complementarity
holds almost everywhere (for the $L^2$ normal cone), without any nondegeneracy
condition. Active arcs may have positive measure; it is only the threshold-contact
times, and hence the biactive times, that are finite (and null).
\end{enumerate}

The paper is organized as follows. Section~\ref{sec:wellposed} establishes
lower-level well-posedness, Lipschitz stability, the quadratic and two-agent
specializations, identifiability, and noise stability. Section~\ref{sec:value}
studies the value function, the optimal-value reformulation, and exactness of the
value-function relaxation. Section~\ref{sec:mpcc} derives the KKT reformulation, the
MPCC relaxation, C-stationarity, and the scalar strict-complementarity result.
Technical MPCC definitions and the detailed scalar proof are in the appendix.

\subsection{Functional setting}
\label{subsec:functional-setting}
Let $T>0$ be fixed. We use the Hilbert spaces
\[
  Y:=H^1(0,T;\R^d),\qquad U:=L^2(0,T;\R^m).
\]
Controls are subject to box constraints
\[
  \Uad:=\{u\in U:\ u_a(t)\le u(t)\le u_b(t)\ \text{for a.e. }t\in(0,T)\},
\]
where the inequalities are componentwise and
\[
  u_a,u_b\in L^\infty(0,T;\R^m),\qquad u_a(t)\le u_b(t)\ \text{a.e.},
\]
and we assume $\Uad\neq\varnothing$. Because $\Uad$ is closed, convex, and bounded in
the reflexive space $U=L^2(0,T;\R^m)$, it is weakly sequentially compact
(Lemma~\ref{lem:box-basic}); this compactness drives the direct method in the
lower-level existence proof.

The state equation is linear,
\[
  \dot y(t)=\Aop y(t)+\Bop u(t)\ \text{for a.e. }t\in(0,T),\qquad y(0)=y_0,
\]
with $\Aop\in\R^{d\times d}$, $\Bop\in\R^{d\times m}$, $y_0\in\R^d$. For every
$u\in U$ this has a unique solution $y_u\in Y$, so the control-to-state map
$\Sop:U\to Y$, $u\mapsto y_u$, is affine and continuous,
\[
  \Sop u=y^0+Su,\qquad S\in\Lc(U,Y),
\]
see Lemma~\ref{lem:control-to-state}. We use the operator notation
\[
  \Eop y-\Gop u=b_0,\qquad
  \Eop y:=(\dot y-\Aop y,\,y(0)),\quad
  \Gop u:=(\Bop u,0),\quad b_0:=(0,y_0).
\]

\subsection{Standing assumptions}
\label{subsec:assumptions}

\begin{assumption}[State--control setting]
\label{ass:A}
\hfill
\begin{enumerate}[label=\textup{(\roman*)}]
\item $\Xad\subset\R^n_+$ is nonempty, compact, and convex.
\item $\Uad\subset U$ is nonempty, closed, convex, and bounded.
\item The state equation is linear and $\Sop u=y^0+Su$ with $S\in\Lc(U,Y)$.
\end{enumerate}
\end{assumption}

\begin{assumption}[Competition features]
\label{ass:B}
For each $\ell=1,\dots,n$, $j_\ell:Y\to\R_+$ is convex and twice continuously
Fr\'echet differentiable. For the stability analysis we assume
\[
  M_j:=\sup_{y\in\Sop(\Uad)}\norm{j'(y)}_{\Lc(Y,\Rn)}<\infty.
\]
A sufficient, often easier-to-check condition is that $j'$ maps bounded subsets of
$Y$ to bounded subsets of $\Lc(Y,\Rn)$; the analysis below needs only the (weaker)
boundedness of $j'$ on $\Sop(\Uad)$ expressed by $M_j<\infty$.
\end{assumption}

\begin{remark}
In infinite dimensions, continuity of $j'$ alone does not imply boundedness on
bounded subsets. The bounded-derivative hypothesis is therefore not a technicality;
it is the condition needed to obtain a global Lipschitz estimate for the lower-level
response map on $\Xad$. By contrast, the codomain $\R_+$ (nonnegativity of $j_\ell$)
is included only for the application interpretation and plays no analytical role: the
results below use the convexity of $j_\ell$ together with $x\in\R^n_+$ to obtain
convexity of $x\cdot j$, and the bounded-derivative hypothesis for stability; none of
the proofs invoke $j_\ell\ge0$.
\end{remark}

\begin{assumption}[Strong control convexity]
\label{ass:C}
The control regularization parameter satisfies $\sigma>0$.
\end{assumption}

\begin{assumption}[Upper-level objective]
\label{ass:D}
The upper-level objective is
\[
  F(x,y,u)=\frac12\norm{y-y^{\mathrm{obs}}}_{L^2(0,T;\R^d)}^2
  +\frac{\beta}{2}\norm{u-u^{\mathrm{obs}}}_U^2+R(x),
\]
where $\beta\ge0$. For existence results, $R:\Xad\to\R$ is weakly lower semicontinuous; for
stationarity results, $R$ is $C^1$ on a neighborhood of $\Xad$.
\end{assumption}

The lower-level problem for fixed $x\in\Xad$ is
\begin{equation}
\tag{$P(x)$}\label{eq:Px}
  \min_{(y,u)}\left\{x\cdot j(y)+\frac{\sigma}{2}\norm{u}_U^2:\
  \Eop y-\Gop u=b_0,\ u\in\Uad\right\}.
\end{equation}
The inverse bilevel problem is
\begin{equation}
\tag{IOC}\label{eq:IOC}
  \min_{x,y,u}\left\{F(x,y,u):\ x\in\Xad,\ (y,u)\in\Psi(x)\right\},
\end{equation}
where $\Psi(x)$ denotes the solution set of \eqref{eq:Px}. Figures~\ref{fig:bilevel_framework} and~\ref{fig:bilevel-architecture} illustrate the schematic of the bilevel inverse control framework, including the upper level program and the lower level program.

\begin{figure}[htbp]
    \centering
    \includegraphics[width=\textwidth]{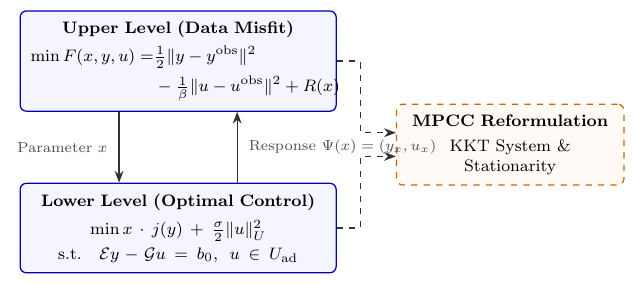}
    \caption{Schematic of the bilevel inverse optimal control framework. The upper-level problem minimizes the data misfit to optimize the parameter $x$, which is then passed to the lower-level optimal control problem to yield the response $\Psi(x)$. The right box represents the KKT and MPCC reformulations used for stationarity analysis.}
\label{fig:bilevel_framework}
\end{figure}

\begin{figure}
    \centering
    \includegraphics[width=\textwidth]{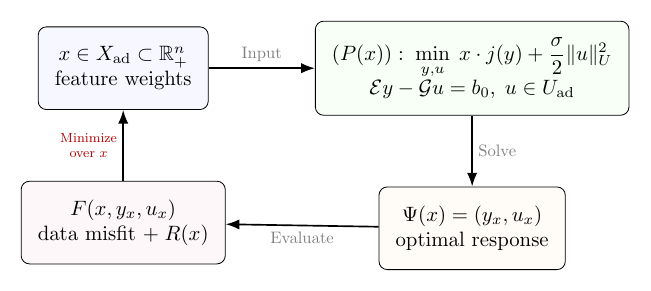}
    \caption{Optimistic bilevel structure of the inverse optimal-control problem.}
    \label{fig:bilevel-architecture}
\end{figure}

\section{Lower-Level Well-Posedness, Stability, and Identifiability}
\label{sec:wellposed}

\subsection{Reduced lower-level problem}
\label{subsec:reduced-lower}
Since $y=\Sop u$, problem \eqref{eq:Px} is equivalent to the control-only problem
\begin{equation}
\tag{$P_{\mathrm{red}}(x)$}\label{eq:Pred}
  \min_{u\in\Uad}\Phi_x(u),\qquad
  \Phi_x(u):=x\cdot j(\Sop u)+\frac{\sigma}{2}\norm{u}_U^2 .
\end{equation}

\begin{theorem}[Lower-level existence and uniqueness]
\label{thm:lower-existence-uniqueness}
Under Assumptions~\ref{ass:A}--\ref{ass:C}, for every $x\in\Xad$, \eqref{eq:Pred}
admits a unique minimizer $u_x\in\Uad$. Consequently $y_x:=\Sop u_x$ is uniquely
determined and the lower-level solution map $\Psi(x):=(y_x,u_x)$ is single-valued.
\end{theorem}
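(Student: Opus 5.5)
The plan is the direct method for existence and strong convexity for uniqueness, applied to \eqref{eq:Pred}. Fix $x\in\Xad$ and write $\Phi_x(u)=g_x(u)+\frac{\sigma}{2}\norm{u}_U^2$ with $g_x(u):=x\cdot j(\Sop u)=\sum_{\ell}x_\ell\, j_\ell(y^0+Su)$. Since $\Sop$ is affine and continuous (Lemma~\ref{lem:control-to-state}), each $j_\ell\circ\Sop$ is convex and continuous on $U$. Because $x_\ell\ge0$, the map $g_x$ is a nonnegative combination of convex continuous functionals, hence convex and continuous. This is exactly where $\Xad\subset\R^n_+$ enters. By Mazur's lemma, a convex continuous functional on a Banach space is weakly lower semicontinuous, so $g_x$ is weakly l.s.c. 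The norm term $\frac{\sigma}{2}\norm{u}_U^2$ is also weakly l.s.c. Therefore $\Phi_x$ is weakly sequentially l.s.c. on $U$.

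For existence, note that $\Phi_x$ is real-valued on the nonempty set $\Uad$. It is bounded below there, since $j_\ell\ge0$, or alternatively because a weakly l.s.c. function on a weakly compact set is bounded below. So I take a minimizing sequence $(u_k)\subset\Uad$. By Lemma~\ref{lem:box-basic}, $\Uad$ is weakly sequentially compact, so a subsequence converges weakly to some $\bar u\in\Uad$. Weak lower semicontinuity then gives $\Phi_x(\bar u)\le\liminf\Phi_x(u_k)=\inf_{\Uad}\Phi_x$, so $u_x:=\bar u$ is a minimizer. No coercivity argument is needed, because the boundedness of $\Uad$ is built into Assumption~\ref{ass:A}(ii).

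For uniqueness, I use that $u\mapsto\frac{\sigma}{2}\norm{u}_U^2$ is $\sigma$-strongly convex on the Hilbert space $U$, by the parallelogram identity. Adding the convex $g_x$ preserves this, so $\Phi_x$ is $\sigma$-strongly convex, and in particular strictly convex because $\sigma>0$ (Assumption~\ref{ass:C}). Suppose $u_1\ne u_2$ were both minimizers. Convexity of $\Uad$ puts their midpoint in $\Uad$, and then
\[
\Phi_x\bigl(\tfrac{u_1+u_2}{2}\bigr)\le \tfrac12\Phi_x(u_1)+\tfrac12\Phi_x(u_2)-\tfrac{\sigma}{8}\norm{u_1-u_2}_U^2<\min_{\Uad}\Phi_x,
\]
which is a contradiction. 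Finally, $(y,u)$ is feasible for \eqref{eq:Px} if and only if $u\in\Uad$ and $y=\Sop u$, by unique solvability of the state equation. Hence \eqref{eq:Px} and \eqref{eq:Pred} have corresponding minimizers, and $\Psi(x)=\{(\Sop u_x,u_x)\}$ is a singleton.

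I expect the only delicate step to be the weak lower semicontinuity of $x\cdot j\circ\Sop$. The features $j_\ell$ are merely $C^2$ on $Y$, and nonlinear, so they are not weakly continuous. The argument must therefore rely on convexity together with strong continuity (Mazur), not on any compactness of $S$. This is also why the sign condition $x\ge0$ is indispensable: with a negative weight, $-j_\ell$ would be concave and weak l.s.c. could fail.
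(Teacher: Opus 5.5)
Your proposal is correct and follows essentially the same route as the paper: the direct method via weak sequential compactness of $\Uad$ and weak lower semicontinuity of the convex continuous features, plus uniqueness from $\sigma$-strong convexity. The only cosmetic difference is that the paper applies weak lower semicontinuity to $j_\ell$ on $Y$ after noting $\Sop u_k\rightharpoonup\Sop\bar u$, whereas you apply it directly to $j_\ell\circ\Sop$ on $U$.
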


\begin{proof}
Fix $x\in\Xad$. Since $x\in\R^n_+$ and each $j_\ell$ is convex, $y\mapsto x\cdot j(y)$
is convex on $Y$; since $\Sop$ is affine, $u\mapsto x\cdot j(\Sop u)$ is convex on
$U$. Hence $\Phi_x$ is $\sigma$-strongly convex because of $\tfrac\sigma2\norm{u}_U^2$,
so it has at most one minimizer over $\Uad$.

Let $\{u_k\}\subset\Uad$ be a minimizing sequence. By Lemma~\ref{lem:box-basic},
$\Uad$ is weakly sequentially compact, so after a subsequence $u_k\rightharpoonup\bar
u\in\Uad$. Since $S\in\Lc(U,Y)$, $\Sop u_k=y^0+Su_k\rightharpoonup y^0+S\bar u=\Sop\bar
u$ weakly in $Y$. Each $j_\ell$ is convex and continuous, hence weakly lower
semicontinuous (Lemma~\ref{lem:wlsci}), and so is $\norm{\cdot}_U^2$. Therefore
$\Phi_x(\bar u)\le\liminf_k\Phi_x(u_k)$, so $\bar u$ is a minimizer. Uniqueness follows
from $\sigma$-strong convexity.
\end{proof}

\begin{theorem}[Lipschitz stability of the response map]
\label{thm:lipschitz-response}
Under Assumptions~\ref{ass:A}--\ref{ass:C} and the bounded-derivative condition in
Assumption~\ref{ass:B}, there is $C_\Psi>0$ such that for all $x_1,x_2\in\Xad$,
\[
  \norm{u_{x_1}-u_{x_2}}_U+\norm{y_{x_1}-y_{x_2}}_Y\le C_\Psi\norm{x_1-x_2}_{\Rn}.
\]
One may take $C_\Psi=\dfrac{M_j\norm{S}_{\Lc(U,Y)}}{\sigma}\bigl(1+\norm{S}_{\Lc(U,Y)}\bigr)$.
\end{theorem}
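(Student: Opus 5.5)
The plan is the standard strong-monotonicity argument for variational inequalities, applied to the reduced problem \eqref{eq:Pred}. For $x\in\Xad$, $\Phi_x$ is convex and Fr\'echet differentiable on $U$. Its gradient is
\[
  \Phi_x'(u)=S^*\bigl(j'(\Sop u)^*x\bigr)+\sigma u ,
\]
where $j'(y)^*x$ denotes the functional $h\mapsto x\cdot j'(y)h$ on $Y$. The minimizer $u_x$ of Theorem~\ref{thm:lower-existence-uniqueness} is therefore characterized by the variational inequality
\[
  \inner{\Phi_x'(u_x)}{v-u_x}_U\ge0\quad\text{for all } v\in\Uad .
\]

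\textbf{Step 1 (test the two inequalities against each other).} Write $u_i:=u_{x_i}$ and $y_i:=\Sop u_i$. Test the inequality for $x_1$ with $v=u_2$ and the one for $x_2$ with $v=u_1$, then add. This gives
\[
  \sigma\norm{u_1-u_2}_U^2+\bigl[x_1\cdot j'(y_1)-x_1\cdot j'(y_2)\bigr](S(u_1-u_2))
  \le \bigl[(x_2-x_1)\cdot j'(y_2)\bigr](S(u_1-u_2)).
\]

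\textbf{Step 2 (discard the monotone term).} Since $x_1\in\R^n_+$ and each $j_\ell$ is convex, $y\mapsto x_1\cdot j(y)$ is convex. Its derivative is therefore monotone. Because $y_1-y_2=S(u_1-u_2)$, the bracketed term on the left is nonnegative and can be dropped.

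\textbf{Step 3 (bound the right-hand side).} Note that $y_2\in\Sop(\Uad)$. By the bounded-derivative condition in Assumption~\ref{ass:B},
\[
  \bigl|(x_2-x_1)\cdot j'(y_2)h\bigr|\le\norm{x_1-x_2}_{\Rn}M_j\norm{h}_Y .
\]
I will take the Euclidean norm on $\Rn$ and read $\norm{j'(y)}_{\Lc(Y,\Rn)}$ with respect to it, so that Cauchy--Schwarz applies. Choosing $h=S(u_1-u_2)$ gives the bound $M_j\norm{S}\,\norm{x_1-x_2}\,\norm{u_1-u_2}_U$. Dividing by $\norm{u_1-u_2}_U$ (the case $u_1=u_2$ is trivial) yields
\[
  \norm{u_1-u_2}_U\le \frac{M_j\norm{S}}{\sigma}\norm{x_1-x_2}.
\]

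\textbf{Step 4 (states).} We have $\norm{y_1-y_2}_Y=\norm{S(u_1-u_2)}_Y\le\norm{S}\norm{u_1-u_2}_U$. Adding this to Step 3 gives
\[
  \norm{u_1-u_2}_U+\norm{y_1-y_2}_Y\le\frac{M_j\norm{S}}{\sigma}\bigl(1+\norm{S}\bigr)\norm{x_1-x_2},
\]
which is the stated $C_\Psi$.

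The main point needing care is Step 2 together with the validity of the first-order characterization. I need Fr\'echet differentiability of $u\mapsto x\cdot j(\Sop u)$, which follows from $j\in C^2$ and the chain rule, and I need monotonicity of the derivative of a convex $C^1$ functional on $Y$, which is standard. No differentiability of active sets enters the argument; the box constraints appear only through the variational inequality. A secondary bookkeeping issue is the norm convention on $\Rn$ used in defining $M_j$, which fixes the constant exactly as stated.
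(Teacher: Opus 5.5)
Your proof is correct and follows essentially the same route as the paper: add the two variational inequalities tested against each other, discard a term that is sign-definite by monotonicity of the derivative of a convex functional, bound the cross term by $M_j\norm{S}\norm{x_1-x_2}\norm{u_1-u_2}_U$, and then add the state estimate. The only difference is cosmetic. The paper splits $j'(y_2)^*x_2-j'(y_1)^*x_1$ as $j'(y_1)^*(x_2-x_1)+\bigl(j'(y_2)-j'(y_1)\bigr)^*x_2$, so $x_2$ sits in the monotone term, whereas you put $j'(y_2)$ in the cross term and $x_1$ in the monotone term.
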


\begin{proof}
Let $u_i:=u_{x_i}$, $y_i:=\Sop u_i$, $i=1,2$. Throughout, $\inner{\cdot}{\cdot}_{U^*,U}$
and $\inner{\cdot}{\cdot}_{Y^*,Y}$ denote duality pairings; $j'(y)^*x\in Y^*$ since
$j'(y)\in\Lc(Y,\Rn)$. The variational inequality for $u_i$ is
\[
  \inner{S^*j'(y_i)^*x_i+\sigma u_i}{\,v-u_i}_{U^*,U}\ge0\qquad\forall v\in\Uad .
\]
Testing $i=1$ with $v=u_2$, $i=2$ with $v=u_1$, and adding,
\[
  \sigma\norm{u_1-u_2}_U^2\le\inner{j'(y_2)^*x_2-j'(y_1)^*x_1}{\,y_1-y_2}_{Y^*,Y} .
\]
Decompose the right-hand side as
\[
  \inner{j'(y_1)^*(x_2-x_1)}{y_1-y_2}_{Y^*,Y}
  +\inner{\bigl(j'(y_2)-j'(y_1)\bigr)^*x_2}{y_1-y_2}_{Y^*,Y} .
\]
The second term is nonpositive by monotonicity of $\nabla\bigl(y\mapsto x_2\cdot
j(y)\bigr)$. Hence $\sigma\norm{u_1-u_2}_U^2\le
M_j\norm{x_1-x_2}_{\Rn}\norm{y_1-y_2}_Y$. Since $y_1-y_2=S(u_1-u_2)$,
\[
  \norm{u_1-u_2}_U\le\frac{M_j\norm{S}}{\sigma}\norm{x_1-x_2}_{\Rn},
\]
and $\norm{y_1-y_2}_Y\le\norm{S}\norm{u_1-u_2}_U$ gives the state estimate. Summing the
two bounds yields the stated $C_\Psi$.
\end{proof}

\begin{remark}[No default $C^1$-smoothness]
The map $x\mapsto\Psi(x)$ is Lipschitz in general but need not be Fr\'echet
differentiable, because the active set of the box constraint may change with $x$.
Under active-set stability and a second-order sufficient condition, directional
differentiability can be obtained; in smooth inactive regimes, or in the
unconstrained quadratic case, $\Psi$ is $C^1$.
\end{remark}

\subsection{Quadratic specialization}
\label{subsec:quadratic}

\begin{proposition}[Quadratic competition features]
\label{prop:quadratic}
Assume $j_\ell(y)=\tfrac12\inner{Q_\ell y}{y}_Y+\inner{q_\ell}{y}_Y+c_\ell$ with
$Q_\ell=Q_\ell^*\succeq0$. Then $j_\ell$ is convex, $C^\infty$, and analytic, with
$j_\ell'(y)=Q_\ell y+q_\ell$ and $j_\ell''(y)=Q_\ell$. On $\Sop(\Uad)$,
\[
  \norm{j'(y)}_{\Lc(Y,\Rn)}\le C_Q(1+\norm{y}_Y),
\]
with $C_Q$ depending only on $\{Q_\ell,q_\ell\}$. Since $\Uad$ is bounded and $\Sop$
continuous, $j'$ is bounded on $\Sop(\Uad)$, and the Lipschitz estimate of
Theorem~\ref{thm:lipschitz-response} holds with
$C_\Psi=\tfrac{M_j\norm{S}}{\sigma}(1+\norm{S}).$
\end{proposition}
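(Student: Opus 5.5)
The plan is to verify the listed properties of each quadratic feature directly, bound the derivative on $\Sop(\Uad)$, and then feed that bound into Theorem~\ref{thm:lipschitz-response}.

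\textbf{Convexity and smoothness.} Here $Q_\ell\in\Lc(Y)$ is self-adjoint and positive semidefinite, and $q_\ell\in Y$. Expanding gives
\[
j_\ell(y+h)=j_\ell(y)+\inner{Q_\ell y+q_\ell}{h}_Y+\tfrac12\inner{Q_\ell h}{h}_Y,
\]
where symmetry of $Q_\ell$ is used to combine the two cross terms. The expansion is exact, which yields three facts at once.
\begin{itemize}
\item The first derivative is $j_\ell'(y)h=\inner{Q_\ell y+q_\ell}{h}_Y$, identified via Riesz with $Q_\ell y+q_\ell$.
\item The second derivative is the constant operator $Q_\ell$, and all higher derivatives vanish. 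Hence $j_\ell$ is $C^\infty$, and analytic because its Taylor series terminates.
\item Convexity follows from $j_\ell(y+h)-j_\ell(y)-j_\ell'(y)h=\tfrac12\inner{Q_\ell h}{h}_Y\ge0$, the first-order characterization. Equivalently, $j_\ell''=Q_\ell\succeq0$.
\end{itemize}

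\textbf{Derivative bound.} The map $j'(y)\in\Lc(Y,\Rn)$ acts by $h\mapsto(\inner{Q_\ell y+q_\ell}{h}_Y)_{\ell=1}^n$. Take the Euclidean norm on $\Rn$ and apply Cauchy--Schwarz componentwise:
\[
\norm{j'(y)}\le\Bigl(\sum_\ell\norm{Q_\ell y+q_\ell}_Y^2\Bigr)^{1/2}\le\sum_\ell\bigl(\norm{Q_\ell}\norm{y}_Y+\norm{q_\ell}_Y\bigr).
\]
So the bound holds with $C_Q:=\sum_\ell\max\{\norm{Q_\ell},\norm{q_\ell}_Y\}$, which depends only on the data. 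This estimate in fact holds on all of $Y$, not just on $\Sop(\Uad)$.

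\textbf{Boundedness on $\Sop(\Uad)$.} $\Uad$ is bounded, say $\norm{u}_U\le r$ for all $u\in\Uad$, by Assumption~\ref{ass:A}(ii) or Lemma~\ref{lem:box-basic}. Since $\Sop u=y^0+Su$,
\[
\norm{\Sop u}_Y\le\norm{y^0}_Y+\norm{S}\,r,
\]
and therefore
\[
M_j\le C_Q\bigl(1+\norm{y^0}_Y+\norm{S}r\bigr)<\infty.
\]

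\textbf{Conclusion.} The quadratic features thus satisfy every part of Assumption~\ref{ass:B}, including $M_j<\infty$. Theorem~\ref{thm:lipschitz-response} then applies verbatim and gives the stated constant $C_\Psi$.

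\textbf{Main obstacle.} There is no real difficulty; the only point needing care is the interpretation of $Q_\ell$ and $q_\ell$. I would take them as a bounded self-adjoint operator on $Y$ and an element of $Y$, respectively. If $Q_\ell$ were instead given as an $L^2$-type form, one would need its boundedness on $Y$, which follows from the continuous embedding $H^1\hookrightarrow L^2$.
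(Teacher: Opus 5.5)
Your proof is correct and follows the same route as the paper's: derivative formulas from the quadratic structure, convexity from $Q_\ell\succeq0$, and boundedness of the affine map $y\mapsto Q_\ell y+q_\ell$ on the bounded set $\Sop(\Uad)$, then an appeal to Theorem~\ref{thm:lipschitz-response}. You simply fill in the details the paper calls immediate, namely the exact second-order expansion, an explicit $C_Q$, and the explicit bound $M_j\le C_Q(1+\norm{y^0}_Y+\norm{S}r)$.
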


\begin{proof}
The derivative formulae are immediate; convexity follows from $Q_\ell\succeq0$. As
$\Sop(\Uad)$ is bounded in $Y$, the affine map $Q_\ell y+q_\ell$ is bounded there.
\end{proof}

\begin{example}
Let \(\mathcal H_\ell\) be a real Hilbert space, \(r_\ell \in \mathcal H_\ell\) be a given reference profile, and \(C_\ell \in \mathcal L(Y, \mathcal H_\ell)\) be a bounded linear observation operator. If
\[
  j_\ell(y)
  =
  \frac12\|C_\ell y-r_\ell\|_{\mathcal H_\ell}^2,
\]
then \(j_\ell\) is twice continuously Fréchet differentiable on \(Y\) with its first and second derivatives given by
\[
  j_\ell'(y)
  =
  C_\ell^*(C_\ell y-r_\ell),
  \qquad
  j_\ell''(y)
  =
  C_\ell^*C_\ell,
\]
where \(C_\ell^* \in \mathcal L(\mathcal H_\ell, Y)\) denotes the Hilbert-adjoint operator of \(C_\ell\).
Thus, Assumption~\ref{ass:B} holds automatically on \(\mathcal S(U_{\rm ad})\).
\end{example}

\subsection{The two-agent competition model}
\label{subsec:two-agent}
We make explicit the two-agent model referenced in the scalar analysis of
Section~\ref{sec:mpcc}. Let $d=m=2$, write $y=(y_1,y_2)$, $u=(u_1,u_2)$, and consider
decoupled agent dynamics
\[
  \dot y_i=a_i y_i+b_i u_i,\qquad y_i(0)=y_{0,i},\qquad i=1,2,
\]
i.e.\ $\Aop=\operatorname{diag}(a_1,a_2)$, $\Bop=\operatorname{diag}(b_1,b_2)$. The
competition features are a coupling term and an individual term,
\[
  j_1(y):=\frac12\int_0^T\bigl(y_1(t)-y_2(t)\bigr)^2\,dt,\qquad
  j_2(y):=\frac12\int_0^T\bigl(c_1y_1(t)^2+c_2y_2(t)^2\bigr)\,dt,
\]
with $c_1,c_2\ge0$, so $n=2$ and $x=(x_1,x_2)\in\R^2_+$. Both features are of the
quadratic type of Proposition~\ref{prop:quadratic} ($Q_1\succeq0$ couples the two
states, $Q_2\succeq0$ is diagonal), hence Assumption~\ref{ass:B} holds. The
lower-level objective is
\[
  x_1\,j_1(y)+x_2\,j_2(y)+\frac\sigma2\norm{u}_U^2 .
\]
When the coupling weight vanishes, $x_1=0$, the objective separates as
\[
  \sum_{i=1}^2\int_0^T\Bigl(\tfrac12 x_2 c_i\,y_i(t)^2+\tfrac\sigma2 u_i(t)^2\Bigr)dt,
\]
subject to the decoupled dynamics and box constraints $u_{a,i}\le u_i\le u_{b,i}$.
Thus \eqref{eq:Pred} splits into two independent scalar constrained LQ problems of
the form \eqref{eq:scalar-problem-app} with running weight $c:=\tfrac12 x_2c_i$
(matching the $c\,y^2$ normalization of \eqref{eq:scalar-problem-app}). This is the
structure exploited in Theorem~\ref{thm:scalar-sc} and
Corollary~\ref{cor:decoupled-two-agent-sc-app}.

\subsection{Local identifiability}
\label{subsec:identifiability}
Let $x^\dagger\in\Xad$ generate ideal observations $(y^\dagger,u^\dagger)=\Psi(x^\dagger)$
and define the reduced noiseless misfit
\[
  \mathcal J_0(x):=\frac12\norm{y_x-y^\dagger}_{L^2}^2+\frac\beta2\norm{u_x-u^\dagger}_U^2 .
\]
On $Y\times U$ introduce the (semi)norm associated with the data metric in the
objective,
\[
  \norm{(\eta,v)}_{\beta}^2:=\norm{\eta}_{L^2}^2+\beta\norm{v}_U^2 .
\]

\begin{proposition}[Local identifiability under a sensitivity condition]
\label{prop:identifiability}
Assume $x^\dagger$ lies in the relative interior of $\Xad$ and $\Psi$ is Fr\'echet
differentiable at $x^\dagger$ relative to $\aff\Xad$, with sensitivity operator
$D\Psi(x^\dagger):T_{\Xad}(x^\dagger)\to Y\times U$. Suppose the sensitivity is
injective in the data metric, i.e.
\begin{equation}
\label{eq:inj-beta}
  \norm{D\Psi(x^\dagger)h}_{\beta}=0\ \Longrightarrow\ h=0
  \qquad\text{for all }h\in T_{\Xad}(x^\dagger).
\end{equation}
Then $x^\dagger$ is locally identifiable: there is a neighborhood $V$ of $x^\dagger$ in
$\Xad$ such that, for $x\in V$, $\Psi(x)=\Psi(x^\dagger)$ implies $x=x^\dagger$.
Equivalently, with a basis $\{e_1,\dots,e_r\}$ of $T_{\Xad}(x^\dagger)$, the Gram matrix
$G_{ij}:=\inner{D\Psi(x^\dagger)e_i}{D\Psi(x^\dagger)e_j}_{\beta}$ is positive definite.
\end{proposition}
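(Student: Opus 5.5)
The plan is to argue by contradiction, using a quantitative lower bound supplied by the injectivity condition \eqref{eq:inj-beta}. Since $x^\dagger$ lies in the relative interior of $\Xad$, the tangent set $T_{\Xad}(x^\dagger)$ is the linear subspace $L:=\aff\Xad-x^\dagger$, which is finite dimensional. Also, $x-x^\dagger\in L$ for every $x\in\Xad$.

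First I will establish the equivalence with the Gram matrix. For $h=\sum_i c_ie_i$ one has $\norm{D\Psi(x^\dagger)h}_\beta^2=c^\top Gc$. Here I use that $\inner{\cdot}{\cdot}_\beta$, defined by $\inner{(\eta,v)}{(\eta',v')}_\beta=\inner{\eta}{\eta'}_{L^2}+\beta\inner{v}{v'}_U$, is a symmetric positive semidefinite bilinear form on $Y\times U$, and that $D\Psi(x^\dagger)$ is linear on $L$. So $G$ is symmetric positive semidefinite. It is positive definite iff $c^\top Gc=0$ forces $c=0$, and that is exactly \eqref{eq:inj-beta}. This also yields the quantitative constant I need. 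The map $h\mapsto\norm{D\Psi(x^\dagger)h}_\beta$ is a seminorm on $L$, and under \eqref{eq:inj-beta} it is a norm. By finite dimensionality it is equivalent to $\norm{\cdot}_{\Rn}$. Hence there is $\gamma>0$ with $\norm{D\Psi(x^\dagger)h}_\beta\ge\gamma\norm{h}$ for all $h\in L$; one may take $\gamma^2=\lambda_{\min}(G)$ relative to an orthonormal basis.

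Next I apply Fréchet differentiability relative to $\aff\Xad$. For $x\in\Xad$ near $x^\dagger$, write $\Psi(x)=\Psi(x^\dagger)+D\Psi(x^\dagger)(x-x^\dagger)+r(x)$ with $\norm{r(x)}_{Y\times U}=o(\norm{x-x^\dagger})$. I need the seminorm to be dominated by the product norm: $\norm{(\eta,v)}_\beta\le c_\beta\norm{(\eta,v)}_{Y\times U}$ with $c_\beta=\max(1,\sqrt\beta)$. This holds because $\norm{\eta}_{L^2}\le\norm{\eta}_{H^1}$. Choose a neighborhood $V$ so small that $c_\beta\norm{r(x)}\le\frac\gamma2\norm{x-x^\dagger}$ on $V$.

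Now suppose $x\in V$ and $\Psi(x)=\Psi(x^\dagger)$. Then $D\Psi(x^\dagger)(x-x^\dagger)=-r(x)$. This gives $\gamma\norm{x-x^\dagger}\le\norm{D\Psi(x^\dagger)(x-x^\dagger)}_\beta\le\frac\gamma2\norm{x-x^\dagger}$, which forces $x=x^\dagger$.

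The step I expect to need the most care is the uniform lower bound $\gamma$. This is where finite dimensionality of $L$ is essential, because injectivity alone would not suffice in infinite dimensions. A related point is checking that the $o(\cdot)$ remainder is controlled in the weaker $\beta$-seminorm. Both points are settled by the norm domination and the compactness of the unit sphere in $L$. I also note that the conclusion in fact gives the stronger local stability estimate $\norm{\Psi(x)-\Psi(x^\dagger)}_\beta\ge\frac\gamma2\norm{x-x^\dagger}$ on $V$.
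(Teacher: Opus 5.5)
Your proof is correct and follows essentially the paper's argument: Fr\'echet differentiability relative to $\aff\Xad$, combined with a coercivity constant for $h\mapsto\norm{D\Psi(x^\dagger)h}_\beta$ on the finite-dimensional tangent space coming from positive definiteness of $G$. The only difference is that the paper squares everything and argues via quadratic growth of $\mathcal J_0$, making $x^\dagger$ a strict local minimizer. Your unsquared estimate $\norm{\Psi(x)-\Psi(x^\dagger)}_\beta\ge\frac\gamma2\norm{x-x^\dagger}$ reaches the same conclusion, yields that growth upon squaring, and makes explicit the control of the remainder in the $\beta$-seminorm that the paper leaves implicit.
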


\begin{proof}
Since $\Psi$ is Fr\'echet differentiable at $x^\dagger$ relative to $\aff\Xad$ and
$\Psi(x^\dagger)=(y^\dagger,u^\dagger)$, the residuals $y_x-y^\dagger$ and $u_x-u^\dagger$
vanish to first order, so for $h\in T_{\Xad}(x^\dagger)$,
\[
  \mathcal J_0(x^\dagger+h)=\frac12\norm{D\Psi(x^\dagger)h}_{\beta}^2+o(\norm{h}^2).
\]
By \eqref{eq:inj-beta} the quadratic form $h\mapsto\norm{D\Psi(x^\dagger)h}_\beta^2$ is
positive on the finite-dimensional space $T_{\Xad}(x^\dagger)$, equivalently the Gram
matrix $G$ is positive definite, so there is $\alpha>0$ with
$\norm{D\Psi(x^\dagger)h}_\beta^2\ge\alpha\norm{h}^2$. Hence
$\mathcal J_0(x^\dagger+h)\ge\mathcal J_0(x^\dagger)+\tfrac\alpha2\norm{h}^2+o(\norm{h}^2)$,
which yields local quadratic growth and a strict local minimum at $x^\dagger$, i.e.\
local identifiability. The argument uses only Fr\'echet differentiability of $\Psi$,
not $C^2$-smoothness.
\end{proof}

\begin{remark}[Role of $\beta$]
\label{rem:beta-role}
The injectivity must be posed in the metric $\norm{\cdot}_\beta$ actually present in
the objective, not in the full norm of $Y\times U$. If $\beta>0$, then
$\norm{\cdot}_\beta^2\ge\min(1,\beta)\norm{\cdot}_{Y\times U}^2$, so
\eqref{eq:inj-beta} is equivalent to injectivity of the full operator
$D\Psi(x^\dagger)$. If $\beta=0$, the controls are not observed, the misfit sees only
the state, and \eqref{eq:inj-beta} reduces to injectivity of the
state-sensitivity component $D\Psi_y(x^\dagger)$ alone; injectivity of the full
$D\Psi(x^\dagger)$ is then not sufficient. This is why the hypothesis is stated
in the $\beta$-metric.
\end{remark}

\subsection{Noise stability}
\label{subsec:noise-stability}
Let noisy observations satisfy
$\norm{y^\delta-y^\dagger}_{L^2}+\norm{u^\delta-u^\dagger}_U\le\delta$, and define the
data misfit and the (possibly regularized) objective
\[
  M_\delta(x):=\frac12\norm{y_x-y^\delta}_{L^2}^2+\frac\beta2\norm{u_x-u^\delta}_U^2,
  \qquad
  \mathcal J_\delta(x):=M_\delta(x)+R_\alpha(x),
\]
and write $M_0$ for the misfit with the exact data $(y^\dagger,u^\dagger)$, so that
$M_0=\mathcal J_0$ and $M_0(x^\dagger)=0$.

\begin{proposition}[Local noise stability, gradient-free]
\label{prop:noise-stability}
Assume the second-order growth condition
\begin{equation}
\label{eq:soc-growth}
  M_0(x)\ge M_0(x^\dagger)+\gamma\norm{x-x^\dagger}^2\qquad\text{for }x\in\Xad\text{ near }x^\dagger,
\end{equation}
with $\gamma>0$, and the regularization-bias condition: there is $C_R\ge0$,
independent of $\delta$, such that for $x$ near $x^\dagger$,
\begin{equation}
\label{eq:reg-bias}
  R_\alpha(x^\dagger)-R_\alpha(x)\le C_R\,\delta\,\norm{x-x^\dagger}.
\end{equation}
Then every local minimizer $x^\delta$ of $\mathcal J_\delta$ sufficiently close to
$x^\dagger$ satisfies $\norm{x^\delta-x^\dagger}\le C\delta$ for all sufficiently small
$\delta$, with $C$ independent of $\delta$. Condition \eqref{eq:reg-bias} holds
trivially with $C_R=0$ in the unregularized case, and holds with a
$\delta$-independent $C_R$ whenever $R_\alpha=\alpha R$ with $R\in C^1$ near
$x^\dagger$ and $\alpha=O(\delta)$ (then $C_R=\sup\norm{\nabla R}\cdot(\alpha/\delta)$
is bounded).
\end{proposition}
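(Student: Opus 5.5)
The plan is to compare $\mathcal J_\delta$ at a local minimizer $x^\delta$ with its value at $x^\dagger$, without using any derivative of $\Psi$. We only need the Lipschitz estimate of Theorem~\ref{thm:lipschitz-response}, the growth condition \eqref{eq:soc-growth}, and the bias condition \eqref{eq:reg-bias}. Write $e:=x^\delta-x^\dagger$, and choose a neighborhood $V$ of $x^\dagger$ in $\Xad$ on which \eqref{eq:soc-growth} and \eqref{eq:reg-bias} both hold. Take $x^\delta\in V$ close enough to $x^\dagger$ that it minimizes $\mathcal J_\delta$ over a neighborhood containing $x^\dagger$; we will show this can be arranged. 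Then the minimality inequality $\mathcal J_\delta(x^\delta)\le\mathcal J_\delta(x^\dagger)$ gives
\[
  M_\delta(x^\delta)-M_\delta(x^\dagger)\le R_\alpha(x^\dagger)-R_\alpha(x^\delta)\le C_R\,\delta\,\norm{e}.
\]

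The key step relates $M_\delta$ to $M_0$ through an exact algebraic identity. Set $a_x:=y_x-y^\dagger$, $b_x:=u_x-u^\dagger$, $r:=y^\dagger-y^\delta$ and $s:=u^\dagger-u^\delta$. Expanding the squares gives
\[
  M_\delta(x)=M_0(x)+\inner{a_x}{r}_{L^2}+\beta\inner{b_x}{s}_U+\tfrac12\norm{r}_{L^2}^2+\tfrac\beta2\norm{s}_U^2 .
\]
The constant terms cancel in the difference $M_\delta(x^\delta)-M_\delta(x^\dagger)$. Since $a_{x^\dagger}=0$, $b_{x^\dagger}=0$ and $M_0(x^\dagger)=0$, we get
\[
  M_\delta(x^\delta)-M_\delta(x^\dagger)=M_0(x^\delta)+\inner{a_{x^\delta}}{r}+\beta\inner{b_{x^\delta}}{s}.
\]
Cauchy--Schwarz and $\norm{\cdot}_{L^2}\le c_Y\norm{\cdot}_Y$ (from $H^1\hookrightarrow L^2$) bound the cross terms. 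Combined with Theorem~\ref{thm:lipschitz-response}, their absolute value is at most $\max(c_Y,\beta)\,C_\Psi\,\delta\,\norm{e}$. This is the crucial gradient-free point: the cross terms are linear in $\norm{e}$ times $\delta$, not merely $O(\delta)$.

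Inserting \eqref{eq:soc-growth} yields
\[
  \gamma\norm{e}^2\le\bigl(C_R+\max(c_Y,\beta)C_\Psi\bigr)\,\delta\,\norm{e}.
\]
Dividing by $\norm{e}$ (the case $e=0$ is trivial) gives $\norm{e}\le C\delta$ with $C:=(C_R+\max(c_Y,\beta)C_\Psi)/\gamma$, which is independent of $\delta$.

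The main obstacle is justifying the comparison with $x^\dagger$, since $x^\delta$ is only a local minimizer. I would handle this by reading "sufficiently close" as requiring $x^\dagger$ to lie in the neighborhood where $x^\delta$ is minimal. Alternatively, one can take $x^\delta$ to be a minimizer of $\mathcal J_\delta$ over the compact set $\overline V$, which exists by continuity of $x\mapsto\Psi(x)$ and of $R_\alpha$. The resulting bound $\norm{e}\le C\delta$ then shows $x^\delta$ lies in the interior of $V$ for small $\delta$, so it is a genuine local minimizer, and the estimate applies to it.

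Finally, for the stated sufficient condition on \eqref{eq:reg-bias}, take $R_\alpha=\alpha R$ with $R\in C^1$ near $x^\dagger$. The mean value theorem gives $R_\alpha(x^\dagger)-R_\alpha(x)\le\alpha\sup\norm{\nabla R}\,\norm{x-x^\dagger}$, and $\alpha=O(\delta)$ makes $C_R=\sup\norm{\nabla R}\cdot(\alpha/\delta)$ bounded independently of $\delta$. In the unregularized case the left side of \eqref{eq:reg-bias} is zero, so $C_R=0$.
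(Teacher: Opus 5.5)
Your proof is correct and follows the paper's argument essentially step for step. You compare $\mathcal J_\delta(x^\delta)\le\mathcal J_\delta(x^\dagger)$, expand the squares exactly so the data-only terms cancel, bound the cross terms by Cauchy--Schwarz and Theorem~\ref{thm:lipschitz-response} to get a term of order $\delta\norm{x^\delta-x^\dagger}$, and close with the growth condition; the only difference is the harmless constant $\max(c_Y,\beta)$ in place of the paper's $1+\beta$. The paper also uses the comparison with $x^\dagger$ without comment, so your explicit reading of ``sufficiently close'' (that $x^\dagger$ lies in the neighborhood where $x^\delta$ is minimal) is exactly the hypothesis the paper's proof relies on implicitly.
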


\begin{proof}
Local optimality of $x^\delta$ gives $\mathcal J_\delta(x^\delta)\le\mathcal
J_\delta(x^\dagger)$, i.e.
\begin{equation}
\label{eq:ns1}
  M_\delta(x^\delta)-M_\delta(x^\dagger)\le R_\alpha(x^\dagger)-R_\alpha(x^\delta)
  \le C_R\,\delta\,\norm{x^\delta-x^\dagger},
\end{equation}
using \eqref{eq:reg-bias}. Set $g(x):=M_0(x)-M_\delta(x)$. Expanding the squares,
\[
  g(x)=\tfrac12\inner{y^\delta-y^\dagger}{\,2y_x-y^\dagger-y^\delta}_{L^2}
       +\tfrac\beta2\inner{u^\delta-u^\dagger}{\,2u_x-u^\dagger-u^\delta}_U,
\]
hence, after cancellation of the data-only terms,
\[
  g(x^\delta)-g(x^\dagger)
  =\inner{y^\delta-y^\dagger}{\,y_{x^\delta}-y_{x^\dagger}}_{L^2}
   +\beta\inner{u^\delta-u^\dagger}{\,u_{x^\delta}-u_{x^\dagger}}_U .
\]
By Cauchy--Schwarz, the noise bound, and Lipschitz continuity of the response map
(Theorem~\ref{thm:lipschitz-response}), and since $y_{x^\dagger}=y^\dagger$,
$u_{x^\dagger}=u^\dagger$,
\begin{equation}
\label{eq:ns2}
  \bigl|g(x^\delta)-g(x^\dagger)\bigr|
  \le\bigl(\norm{y^\delta-y^\dagger}_{L^2}+\beta\norm{u^\delta-u^\dagger}_U\bigr)
     \,C_\Psi\norm{x^\delta-x^\dagger}
  \le (1+\beta)C_\Psi\,\delta\,\norm{x^\delta-x^\dagger}.
\end{equation}
Now
\[
  M_0(x^\delta)-M_0(x^\dagger)
  =\bigl[M_\delta(x^\delta)-M_\delta(x^\dagger)\bigr]+\bigl[g(x^\delta)-g(x^\dagger)\bigr],
\]
so combining \eqref{eq:ns1}, \eqref{eq:ns2}, and the growth condition
\eqref{eq:soc-growth} (recall $M_0(x^\dagger)=0$),
\[
  \gamma\norm{x^\delta-x^\dagger}^2
  \le M_0(x^\delta)-M_0(x^\dagger)
  \le \bigl(C_R+(1+\beta)C_\Psi\bigr)\,\delta\,\norm{x^\delta-x^\dagger}.
\]
If $x^\delta\neq x^\dagger$, divide by $\norm{x^\delta-x^\dagger}$ to obtain
$\norm{x^\delta-x^\dagger}\le C\delta$ with $C=\bigl(C_R+(1+\beta)C_\Psi\bigr)/\gamma$;
the inequality is trivial otherwise.
\end{proof}

\begin{remark}
The proof uses only the second-order growth of the noiseless misfit and Lipschitz
dependence of the response on the parameter; it does not require Fr\'echet
differentiability of $\Psi$, consistently with the nonsmoothness inherent to the
constrained problem. The local growth \eqref{eq:soc-growth} is, in turn, guaranteed
by the identifiability condition of Proposition~\ref{prop:identifiability} whenever
$\Psi$ is differentiable at $x^\dagger$, and may hold more generally.
\end{remark}

\section{Value-Function Reformulation and Exact Relaxation}
\label{sec:value}

\subsection{Value function}
\label{subsec:value-function}
Define the lower-level value function
$\varphi(x):=\min_{u\in\Uad}\Phi_x(u)=\Phi_x(u_x).$

\begin{theorem}[Concavity and Lipschitz continuity of the value function]
\label{thm:value-concave-lipschitz}
Under Assumptions~\ref{ass:A}--\ref{ass:C}, $\varphi:\Xad\to\R$ is concave. If in
addition the bounded-derivative hypothesis of Assumption~\ref{ass:B} holds (so that
$j(\Sop(\Uad))$ is bounded), then $\varphi$ is Lipschitz continuous, with constant
$L_\varphi:=\sup_{u\in\Uad}\norm{j(\Sop u)}_{\Rn}<\infty$. In all cases, for every
$x\in\Xad$, $j(y_x)$ is a concave supergradient:
\[
  \varphi(x')\le\varphi(x)+(x'-x)\cdot j(y_x)\qquad\forall x'\in\Xad .
\]
\end{theorem}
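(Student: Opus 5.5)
The plan is to exploit that $x\mapsto\Phi_x(u)$ is affine for each fixed $u\in\Uad$:
\[
  \Phi_x(u)=x\cdot j(\Sop u)+\frac\sigma2\norm{u}_U^2 .
\]
By Theorem~\ref{thm:lower-existence-uniqueness} the minimum defining $\varphi$ is attained, so $\varphi(x)=\min_{u\in\Uad}\Phi_x(u)$ is a pointwise infimum of affine functions of $x$. Concavity then follows at once. Writing $x_\lambda=\lambda x_1+(1-\lambda)x_2$, we have for every $u$
\[
  \Phi_{x_\lambda}(u)=\lambda\Phi_{x_1}(u)+(1-\lambda)\Phi_{x_2}(u)\ge\lambda\varphi(x_1)+(1-\lambda)\varphi(x_2).
\]
Taking the infimum over $u$ gives the concavity inequality. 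Convexity of $\Xad$ (Assumption~\ref{ass:A}) guarantees that $x_\lambda\in\Xad$.

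Next comes the supergradient inequality. Since $u_x$ is feasible for $P_{\mathrm{red}}(x')$ for any $x'\in\Xad$, we get
\[
  \varphi(x')\le\Phi_{x'}(u_x)=\Phi_x(u_x)+(x'-x)\cdot j(\Sop u_x)=\varphi(x)+(x'-x)\cdot j(y_x).
\]
This uses only that the map is affine, so it holds under Assumptions~\ref{ass:A}--\ref{ass:C} alone, as claimed ("in all cases").

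For the Lipschitz estimate, I will apply the supergradient inequality twice, once at $x$ and once at $x'$. This yields
\[
  (x'-x)\cdot j(y_{x'})\le\varphi(x')-\varphi(x)\le(x'-x)\cdot j(y_x).
\]
By Cauchy--Schwarz, $|\varphi(x')-\varphi(x)|\le L_\varphi\norm{x'-x}$ with $L_\varphi=\sup_{u\in\Uad}\norm{j(\Sop u)}_{\Rn}$.

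The only real work is showing $L_\varphi<\infty$, and this is the step I expect to be the main obstacle, albeit a mild one. Since $\Uad$ is bounded, $\Sop(\Uad)$ is bounded in $Y$ and convex (affine image of a convex set), say contained in a ball of radius $\rho$. I will fix $\bar y=\Sop\bar u$ for some $\bar u\in\Uad$ and apply the mean value inequality along the segment from $\bar y$ to $y\in\Sop(\Uad)$, which stays inside $\Sop(\Uad)$ by convexity. On that segment $\norm{j'}\le M_j$, so
\[
  \norm{j(y)}_{\Rn}\le\norm{j(\bar y)}_{\Rn}+M_j\norm{y-\bar y}_Y\le\norm{j(\bar y)}_{\Rn}+2\rho M_j<\infty.
\]
Hence the bounded-derivative hypothesis yields boundedness of $j(\Sop(\Uad))$, which is what the parenthetical in the statement asserts.
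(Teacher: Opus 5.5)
Your proposal is correct and follows the paper's proof essentially step by step: concavity as an infimum of functions affine in $x$, the supergradient inequality from feasibility of $u_x$ for the problem at $x'$, the Lipschitz bound from the two-sided comparison, and finiteness of $L_\varphi$ from the mean-value inequality on the bounded convex set $\Sop(\Uad)$. Your ``supergradient inequality applied twice'' is exactly the paper's ``test $u_{x'}$ at $x$ and interchange,'' and you are slightly more explicit about the segment argument and the role of convexity of $\Xad$.
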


\begin{proof}
For fixed $u\in\Uad$, $x\mapsto x\cdot j(\Sop u)+\tfrac\sigma2\norm{u}_U^2$ is affine;
$\varphi$ is the pointwise infimum of affine functions, hence concave.

For the Lipschitz bound, note that $\Sop(\Uad)$ is bounded and convex in $Y$
(image of the bounded convex set $\Uad$ under the affine map $\Sop$). On a bounded
convex set, the bounded-derivative hypothesis of Assumption~\ref{ass:B} makes each
$j_\ell$ Lipschitz, by the mean-value inequality
\[
  |j_\ell(\Sop u)-j_\ell(\Sop u')|\le M_j\norm{\Sop u-\Sop u'}_Y,
\]
so in particular $j(\Sop(\Uad))$ is bounded; let
$L_\varphi:=\sup_{u\in\Uad}\norm{j(\Sop u)}_{\Rn}<\infty$. 
For any $x, x' \in \Xad$, let $u_x, u_{x'} \in \Uad$ be the unique minimizers associated with $\varphi(x)$ and $\varphi(x')$, respectively. Testing $u_{x'}$ as a suboptimal control for the problem corresponding to $x$ yields
\[
  \varphi(x) - \varphi(x') \le \Phi(x, u_{x'}) - \Phi(x', u_{x'}) = (x - x') \cdot j(\Sop u_{x'}).
\]
By interchanging the roles of $x$ and $x'$ and exploiting the boundedness of the mapping $u \mapsto j(\Sop u)$ over the bounded set $\Uad$, we readily obtain
\[
  |\varphi(x) - \varphi(x')| \le L_\varphi \norm{x - x'}_{\Rn}
\]
with $L_\varphi := \sup_{u \in \Uad} \norm{j(\Sop u)}_{\Rn} < \infty$.
Finally, $u_x$ is feasible for the lower-level problem at $x'$, so
\[
  \varphi(x')\le x'\cdot j(y_x)+\frac\sigma2\norm{u_x}_U^2
  =\varphi(x)+(x'-x)\cdot j(y_x).
\]
\end{proof}

\subsection{Optimal-value reformulation}
\label{subsec:ovr}
The bilevel problem is equivalent to the optimal-value reformulation
\begin{equation}
\tag{OVR}\label{eq:OVR}
\begin{aligned}
  \min_{x,y,u}\ &F(x,y,u)\\
  \text{s.t.}\ &x\in\Xad,\quad \Eop y-\Gop u=b_0,\quad u\in\Uad,\\
  &f(x,y,u)-\varphi(x)\le0,\qquad f(x,y,u):=x\cdot j(y)+\frac\sigma2\norm{u}_U^2 .
\end{aligned}
\end{equation}
At every feasible point the value constraint is active, so classical constraint
qualifications generally fail \cite{ye1995optimality,yu2026size}. The reformulation \eqref{eq:OVR} is
therefore useful for existence and relaxation arguments but is not the preferred form
for deriving stationarity; the KKT reformulation of Section~\ref{sec:mpcc} is used
for that purpose.

\subsection{Relaxed value-function problem}
\label{subsec:relaxed-value}
For $\varepsilon>0$, define
\begin{equation}
\tag{$\mathrm{OVR}_\varepsilon$}\label{eq:OVReps}
\begin{aligned}
  \min_{x,y,u}\ &F(x,y,u)\\
  \text{s.t.}\ &x\in\Xad,\quad \Eop y-\Gop u=b_0,\quad u\in\Uad,\quad
  f(x,y,u)-\varphi(x)\le\varepsilon.
\end{aligned}
\end{equation}

\begin{lemma}[Existence of relaxed minimizers]
\label{lem:ovreps-existence}
Under Assumptions~\ref{ass:A}--\ref{ass:D} with the bounded-derivative hypothesis of
Assumption~\ref{ass:B}, for each $\varepsilon\ge0$ the problem \eqref{eq:OVReps}
admits a global minimizer.
\end{lemma}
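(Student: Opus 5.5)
We argue by the direct method on the reduced form of \eqref{eq:OVReps}. Since the state equation determines $y=\Sop u$, the feasible set is
\[
  K_\varepsilon:=\{(x,u)\in\Xad\times\Uad:\ x\cdot j(\Sop u)+\tfrac\sigma2\norm{u}_U^2-\varphi(x)\le\varepsilon\},
\]
and the objective becomes $\tilde F(x,u):=F(x,\Sop u,u)$. The set $K_\varepsilon$ is nonempty for every $\varepsilon\ge0$. Indeed, for any $x\in\Xad$ the pair $(x,u_x)$ from Theorem~\ref{thm:lower-existence-uniqueness} satisfies the constraint with value $0\le\varepsilon$. Moreover $\tilde F$ is bounded below, because the two misfit terms are nonnegative and $R$ is bounded below on the compact set $\Xad$. (This uses weak lower semicontinuity of $R$ on a finite-dimensional compact set, which is ordinary lower semicontinuity, so $R$ attains its minimum.) So we may take a minimizing sequence $(x_k,u_k)\in K_\varepsilon$.

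The next step is compactness. Since $\Xad$ is compact and $\Uad$ is weakly sequentially compact (Lemma~\ref{lem:box-basic}), a subsequence satisfies $x_k\to\bar x\in\Xad$ and $u_k\rightharpoonup\bar u\in\Uad$. Consequently $\Sop u_k\rightharpoonup\Sop\bar u$ weakly in $Y$. To pass to the misfit in $L^2(0,T;\R^d)$, we use that the embedding $H^1(0,T;\R^d)\hookrightarrow L^2(0,T;\R^d)$ is compact, which gives $\Sop u_k\to\Sop\bar u$ strongly in $L^2$; mere weak convergence would also suffice for lower semicontinuity of the norm. Weak lower semicontinuity of $\norm{\cdot}^2$ in $L^2$ and $U$, together with lower semicontinuity of $R$, then yields $\tilde F(\bar x,\bar u)\le\liminf_k\tilde F(x_k,u_k)$.

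The main obstacle is closedness of $K_\varepsilon$ under this mixed strong/weak convergence, that is, showing
\[
  \bar x\cdot j(\Sop\bar u)+\tfrac\sigma2\norm{\bar u}^2-\varphi(\bar x)\le\varepsilon .
\]
We would handle the three pieces separately.
\begin{enumerate}
\item By Theorem~\ref{thm:value-concave-lipschitz}, $\varphi$ is Lipschitz on $\Xad$, so $\varphi(x_k)\to\varphi(\bar x)$.
\item Split $x_k\cdot j(\Sop u_k)=\bar x\cdot j(\Sop u_k)+(x_k-\bar x)\cdot j(\Sop u_k)$. The second term tends to zero because $j(\Sop(\Uad))$ is bounded by $L_\varphi$, using the bounded-derivative hypothesis as in the proof of Theorem~\ref{thm:value-concave-lipschitz}.
\item The map $u\mapsto\bar x\cdot j(\Sop u)$ is convex and continuous, since $\bar x\ge0$ and each $j_\ell$ is convex and continuous. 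Hence it is weakly lower semicontinuous (Lemma~\ref{lem:wlsci}), and the same holds for $\tfrac\sigma2\norm{u}^2$.
\end{enumerate}
Taking $\liminf$ in the constraint inequality for $(x_k,u_k)$ and combining these three facts gives the displayed inequality, so $(\bar x,\bar u)\in K_\varepsilon$.

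Together with the lower semicontinuity of $\tilde F$, this shows that $(\bar x,\Sop\bar u,\bar u)$ is a global minimizer of \eqref{eq:OVReps}. For $\varepsilon=0$ the same argument applies verbatim, and the limit is then a minimizer of \eqref{eq:OVR}.
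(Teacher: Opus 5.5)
Your proposal is correct and follows essentially the same route as the paper: you reduce to $(x,u)\in\Xad\times\Uad$, extract $x_k\to\bar x$ and $u_k\rightharpoonup\bar u$, and close the value constraint by the splitting $x_k\cdot j(\Sop u_k)=\bar x\cdot j(\Sop u_k)+(x_k-\bar x)\cdot j(\Sop u_k)$, using weak lower semicontinuity for the first term, boundedness of $j(\Sop(\Uad))$ for the second, and continuity of $\varphi$. Your additional remarks, namely boundedness of $\tilde F$ from below and the compact embedding $H^1\hookrightarrow L^2$, are correct but not needed for the argument.
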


\begin{proof}
The feasible set is nonempty (it contains $(x,\Sop u_x,u_x)$ for any $x\in\Xad$,
which satisfies the value constraint with slack $0\le\varepsilon$). Let
$(x_j,y_j,u_j)$ be a minimizing sequence; eliminating the state via $y_j=\Sop u_j$,
the variables are $(x_j,u_j)\in\Xad\times\Uad$. Since $\Xad$ is compact and $\Uad$ is
weakly sequentially compact (Lemma~\ref{lem:box-basic}), after a subsequence
$x_j\to\bar x$ in $\Rn$ (strongly) and $u_j\rightharpoonup\bar u$ in $U$, hence
$y_j=\Sop u_j\rightharpoonup\bar y=\Sop\bar u$ in $Y$. The objective $F$ is weakly
lower semicontinuous (the misfit terms are convex and continuous, $R$ is weakly l.s.c.).

For the value constraint we treat $f(x,\Sop u,u)=x\cdot j(\Sop u)+\tfrac\sigma2
\norm{u}_U^2$ carefully, since $(x,u)\mapsto x\cdot j(\Sop u)$ is not
jointly convex. Split
\[
  x_j\cdot j(\Sop u_j)
  =\bar x\cdot j(\Sop u_j)+(x_j-\bar x)\cdot j(\Sop u_j).
\]
For the first term, $y\mapsto\bar x\cdot j(y)$ is convex and continuous (as
$\bar x\in\R^n_+$ and each $j_\ell$ is convex), hence weakly l.s.c., so
$\bar x\cdot j(\Sop\bar u)\le\liminf_j\bar x\cdot j(\Sop u_j)$. For the second term,
$\norm{(x_j-\bar x)\cdot j(\Sop u_j)}\le\norm{x_j-\bar x}\,\sup_{u\in\Uad}
\norm{j(\Sop u)}_{\Rn}\to0$, the supremum being finite by the bounded-derivative
hypothesis (cf.\ $L_\varphi$ in Theorem~\ref{thm:value-concave-lipschitz}). Adding the
weakly l.s.c.\ term $\tfrac\sigma2\norm{u}_U^2$ gives
$f(\bar x,\bar y,\bar u)\le\liminf_j f(x_j,y_j,u_j)$. Since $\varphi$ is continuous on
$\Xad$ (Theorem~\ref{thm:value-concave-lipschitz}) and $x_j\to\bar x$, we obtain
\[
  f(\bar x,\bar y,\bar u)-\varphi(\bar x)
  \le\liminf_j\bigl[f(x_j,y_j,u_j)-\varphi(x_j)\bigr]\le\varepsilon .
\]
Thus $(\bar x,\bar y,\bar u)$ is feasible and minimizing.
\end{proof}

\begin{theorem}[Exactness of the value relaxation in the limit]
\label{thm:value-relaxation-exact}
Suppose, in addition to Assumptions~\ref{ass:A}--\ref{ass:D} and the
bounded-derivative hypothesis, that \eqref{eq:IOC} is feasible. Let
$\varepsilon_k\downarrow0$ and let $(x_k,y_k,u_k)$ be global minimizers of
\eqref{eq:OVReps} with $\varepsilon=\varepsilon_k$ (which exist by
Lemma~\ref{lem:ovreps-existence}). Then there is a subsequence (not relabeled) and a
global minimizer $(\bar x,\bar y,\bar u)$ of \eqref{eq:IOC} with $x_k\to\bar x$ in
$\Rn$, $y_k\to\bar y$ in $Y$, and $u_k\to\bar u$ in $U$.
\end{theorem}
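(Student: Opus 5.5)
The plan is to get strong convergence of the controls directly from the strong convexity of the lower level, so that no weak-compactness argument in $U$ is needed. First, every point feasible for \eqref{eq:IOC} is feasible for \eqref{eq:OVReps} for every $\varepsilon\ge0$. Indeed, it has the form $(x,\Sop u_x,u_x)$, and its value constraint holds with slack $0$. Since \eqref{eq:IOC} is feasible, this gives
\[
F(x_k,y_k,u_k)\le \vartheta:=\inf\eqref{eq:IOC}<\infty\qquad\text{for all }k.
\]
By compactness of $\Xad$ (Assumption~\ref{ass:A}), after a subsequence $x_k\to\bar x\in\Xad$. Set $\bar u:=u_{\bar x}$ and $\bar y:=\Sop\bar u=y_{\bar x}$. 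Then $(\bar x,\bar y,\bar u)$ is feasible for \eqref{eq:IOC} by Theorem~\ref{thm:lower-existence-uniqueness}.

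The key step is the quantitative estimate
\[
\frac\sigma2\norm{u_k-u_{x_k}}_U^2\le \Phi_{x_k}(u_k)-\Phi_{x_k}(u_{x_k}) = f(x_k,y_k,u_k)-\varphi(x_k)\le\varepsilon_k .
\]
To prove it, I would use that $\Phi_{x_k}$ is $\sigma$-strongly convex, as in the proof of Theorem~\ref{thm:lower-existence-uniqueness}. This gives
\[
\Phi_{x_k}(u_k)\ge\Phi_{x_k}(u_{x_k})+\inner{\Phi_{x_k}'(u_{x_k})}{u_k-u_{x_k}}+\tfrac\sigma2\norm{u_k-u_{x_k}}^2 .
\]
The variational inequality at $u_{x_k}$, tested with $v=u_k\in\Uad$, makes the linear term nonnegative. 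Here $y_k=\Sop u_k$ by the state equation, so $f(x_k,y_k,u_k)=\Phi_{x_k}(u_k)$. It follows that $\norm{u_k-u_{x_k}}_U\le\sqrt{2\varepsilon_k/\sigma}\to0$.

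Next I combine this with the Lipschitz estimate of Theorem~\ref{thm:lipschitz-response}, which is available under the bounded-derivative hypothesis:
\[
\norm{u_k-\bar u}_U\le\sqrt{2\varepsilon_k/\sigma}+C_\Psi\norm{x_k-\bar x}\to0 .
\]
Since $S\in\Lc(U,Y)$, also $y_k=\Sop u_k\to\Sop\bar u=\bar y$ strongly in $Y$. This gives the asserted strong convergences in $\Rn$, $Y$ and $U$.

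For optimality, the misfit terms of $F$ are continuous under strong convergence in $L^2$, and $Y\hookrightarrow L^2$ continuously. The term $R$ is lower semicontinuous by Assumption~\ref{ass:D}; weak l.s.c.\ on the finite-dimensional $\Xad$ is just l.s.c. Hence
\[
F(\bar x,\bar y,\bar u)\le\liminf_k F(x_k,y_k,u_k)\le\vartheta .
\]
Because $(\bar x,\bar y,\bar u)$ is feasible for \eqref{eq:IOC}, it is a global minimizer.

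The main obstacle is obtaining strong rather than merely weak convergence of $u_k$. The naive route, mirroring Lemma~\ref{lem:ovreps-existence}, gives only $u_k\rightharpoonup\bar u$. Identifying that weak limit as $u_{\bar x}$ would additionally need the l.s.c.\ argument together with uniqueness. The strong-convexity gap inequality above avoids all of this. The only point to check carefully is that the linear term is nonnegative: this needs Fr\'echet differentiability of $\Phi_{x_k}$, which holds because $j\in C^2$, and it needs $u_k\in\Uad$, which holds by feasibility in \eqref{eq:OVReps}.
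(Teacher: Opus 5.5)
Your proposal is correct and follows essentially the same route as the paper: compactness of $\Xad$ for $x_k$, the strong-convexity gap estimate $\frac{\sigma}{2}\norm{u_k-u_{x_k}}_U^2\le\varepsilon_k$, and the Lipschitz estimate of Theorem~\ref{thm:lipschitz-response} give strong convergence to $(\bar x,y_{\bar x},u_{\bar x})$, and optimality follows from lower semicontinuity of $F$ together with the inclusion of the \eqref{eq:IOC}-feasible set in every relaxed feasible set. The only difference is that you derive the quadratic-growth inequality at the minimizer from the gradient inequality plus the variational inequality, whereas the paper simply asserts it.
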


\begin{proof}
By compactness of $\Xad$, after a subsequence $x_k\to\bar x\in\Xad$. Feasibility gives
$f(x_k,y_k,u_k)-\varphi(x_k)\le\varepsilon_k$; since $y_k=\Sop u_k$ and
$\varphi(x_k)=\Phi_{x_k}(u_{x_k})$, this reads
$\Phi_{x_k}(u_k)-\Phi_{x_k}(u_{x_k})\le\varepsilon_k$. Because $u_{x_k}$ minimizes the
$\sigma$-strongly convex functional $\Phi_{x_k}$ over the convex set $\Uad$, the
strong-convexity inequality at the minimizer gives
\begin{equation}
\label{eq:sc-at-min}
  \Phi_{x_k}(u)\ge\Phi_{x_k}(u_{x_k})+\frac\sigma2\norm{u-u_{x_k}}_U^2
  \qquad\forall u\in\Uad,
\end{equation}
so in particular
\[
  \frac\sigma2\norm{u_k-u_{x_k}}_U^2\le\Phi_{x_k}(u_k)-\Phi_{x_k}(u_{x_k})\le\varepsilon_k,
\]
whence $u_k-u_{x_k}\to0$ in $U$. By Theorem~\ref{thm:lipschitz-response},
$u_{x_k}\to u_{\bar x}$ and $y_{x_k}\to y_{\bar x}$, hence $u_k\to u_{\bar x}$ and
$y_k=\Sop u_k\to\Sop u_{\bar x}=y_{\bar x}$. Thus $(\bar x,y_{\bar x},u_{\bar x})$ is
feasible for \eqref{eq:IOC}.

For optimality, any feasible point $(x,y,u)$ of \eqref{eq:IOC} is feasible for every
\eqref{eq:OVReps}, so $F(x_k,y_k,u_k)\le F(x,y,u)$. Lower semicontinuity of $F$ under
the established convergences yields $F(\bar x,\bar y,\bar u)\le F(x,y,u)$; hence
$(\bar x,\bar y,\bar u)$ is a global minimizer of \eqref{eq:IOC}.
\end{proof}

\begin{remark}
The relaxation in Theorem~\ref{thm:value-relaxation-exact} is a value-function
relaxation; it is distinct from the Scholtes relaxation of the complementarity
constraints used in Section~\ref{sec:mpcc}.
\end{remark}

\section{KKT Reformulation, MPCC Stationarity, and Scalar Strict Complementarity}
\label{sec:mpcc}

\subsection{Lower-level optimality system and equivalence}
\label{subsec:kkt-system}
We make the operator setting precise. The state operator acts as
\[
  \Eop:Y\to\Wsp:=L^2(0,T;\R^d)\times\R^d,\qquad
  \Eop y=(\dot y-\Aop y,\,y(0)),
\]
and $\Gop:U\to\Wsp$, $\Gop u=(\Bop u,0)$; the constraint is $\Eop y-\Gop u=b_0$ in
$\Wsp$. We identify $\Wsp$ with its dual $\Wsp^*$, so the adjoint variable satisfies
$p\in\Wsp^*\cong\Wsp$, and the adjoints act as $\Eop^*:\Wsp^*\to Y^*$,
$\Gop^*:\Wsp^*\to U^*$.

\medskip
\emph{Reduced optimality.} The reduced problem \eqref{eq:Pred} is the minimization of
the convex, Fr\'echet-differentiable functional $\Phi_x$ over the closed convex set
$\Uad$. By convex subdifferential calculus for a smooth convex function plus the
indicator of a closed convex set (no constraint qualification is required for this
sum rule, since $\Phi_x$ is finite and continuous on all of $U$), $u_x$ is the global
minimizer if and only if
\begin{equation}
\label{eq:reduced-incl}
  0\in S^*j'(\Sop u_x)^*x+\sigma u_x+N_{\Uad}(u_x),
\end{equation}
which is necessary and sufficient by convexity. Equivalently, there is
$\lambda\in U^*\cong U$ with
\begin{equation}
\label{eq:reduced-mult}
  S^*j'(\Sop u_x)^*x+\sigma u_x+\lambda=0,\qquad \lambda\in N_{\Uad}(u_x).
\end{equation}

\emph{Adjoint variable.} Introduce $p\in\Wsp^*$ as the solution of the linear adjoint
equation associated with the state constraint,
\begin{equation}
\label{eq:adjoint-def}
  \Eop^*p=-\,j'(y_x)^*x,\qquad y_x=\Sop u_x.
\end{equation}
The state operator $\Eop:Y\to\Wsp$ is a Banach-space isomorphism: for every
$(f,\zeta)\in\Wsp=L^2(0,T;\R^d)\times\R^d$ the linear ODE $\dot y-\Aop y=f$,
$y(0)=\zeta$, has a unique solution $y\in Y$ depending boundedly on $(f,\zeta)$
(Lemma~\ref{lem:control-to-state}). Hence the adjoint $\Eop^*:\Wsp^*\to Y^*$ is also a
Banach-space isomorphism, and \eqref{eq:adjoint-def} has a unique solution
$p\in\Wsp^*$. Writing $S=(\text{state part of }\Eop^{-1})\,\Gop$, so that
$S^*=\Gop^*(\text{state part of }\Eop^{-1})^*$, equation \eqref{eq:adjoint-def} gives
the identity
\[
  S^*j'(y_x)^*x=-\,\Gop^*p,
\]
so \eqref{eq:reduced-mult} becomes the standard co-state form. Collecting these, the
lower-level optimality system reads
\begin{equation}
\label{eq:kkt}
  \Eop^*p+j'(y)^*x=0,\quad
  \sigma u-\Gop^*p+\lambda=0,\quad
  \lambda\in N_{\Uad}(u),\quad
  \Eop y-\Gop u=b_0 ,
\end{equation}
with $p\in\Wsp^*$, $\lambda\in U$. (Formally, \eqref{eq:kkt} is the stationarity system
of the Lagrangian
$\mathscr L_{\mathrm{low}}(x,y,u,p)=x\cdot j(y)+\tfrac\sigma2\norm{u}_U^2
+\inner{p}{\Eop y-\Gop u-b_0}_{\Wsp}$.)

\begin{lemma}[Equivalence of the optimality system]
\label{lem:kkt-equiv}
Under Assumptions~\ref{ass:A}--\ref{ass:C}, for each $x\in\Xad$ a tuple $(y,u,p,\lambda)$
solves \eqref{eq:kkt} if and only if $(y,u)=\Psi(x)$ is the unique global solution of
\eqref{eq:Px}. Consequently \eqref{eq:IOC} is equivalent to the single-level problem
obtained by replacing $(y,u)\in\Psi(x)$ with the constraints \eqref{eq:kkt}.
\end{lemma}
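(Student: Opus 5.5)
The plan is to reduce everything to the reduced inclusion \eqref{eq:reduced-incl}, which is already known to characterize the unique minimizer $u_x$ of \eqref{eq:Pred} (Theorem~\ref{thm:lower-existence-uniqueness} together with convexity of $\Phi_x$ and the sum rule). The proof then consists of showing that the system \eqref{eq:kkt} is, componentwise, a rewriting of \eqref{eq:reduced-mult} with $y=\Sop u$ and $p$ given by \eqref{eq:adjoint-def}. I will prove the two implications separately.

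\emph{($\Leftarrow$) From the solution to the KKT tuple.} Let $(y,u)=\Psi(x)$, so $u=u_x$ and $y=y_x=\Sop u_x$. Then the state equation $\Eop y-\Gop u=b_0$ holds by the definition of $\Sop$ (Lemma~\ref{lem:control-to-state}). By \eqref{eq:reduced-mult} there is $\lambda\in N_{\Uad}(u)$ with $S^*j'(y)^*x+\sigma u+\lambda=0$. Define $p$ as the unique solution of \eqref{eq:adjoint-def}; this exists because $\Eop^*$ is an isomorphism. This gives the first equation of \eqref{eq:kkt}. The identity $S^*j'(y)^*x=-\Gop^*p$ turns the reduced stationarity into $\sigma u-\Gop^*p+\lambda=0$.

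\emph{($\Rightarrow$) From the KKT tuple to the solution.} Suppose $(y,u,p,\lambda)$ solves \eqref{eq:kkt}. Since $\Eop$ is an isomorphism, the state equation forces $y=\Sop u$. The adjoint equation then says exactly that $p$ is the (unique) solution of \eqref{eq:adjoint-def} with this $y$. Hence the same identity gives $\Gop^*p=-S^*j'(\Sop u)^*x$. Substituting this into the second equation yields \eqref{eq:reduced-mult}, hence \eqref{eq:reduced-incl}. By sufficiency of the first-order condition for the convex problem, $u$ minimizes $\Phi_x$ over $\Uad$, and by uniqueness $u=u_x$ and $(y,u)=\Psi(x)$. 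The consequence for \eqref{eq:IOC} is immediate: the feasible $(x,y,u)$ sets coincide, with $p,\lambda$ adjoined as auxiliary variables, so the optimal values and minimizers correspond, and the objective $F$ does not involve $(p,\lambda)$.

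\emph{Main obstacle.} The only step needing care is the identity $S^*j'(y)^*x=-\Gop^*p$, i.e.\ verifying the adjoint factorization rigorously. I would write $Su=\Pi\,\Eop^{-1}\Gop u$, where $\Pi$ extracts the state part. More cleanly, $Su=\Eop^{-1}(\Gop u)$, since $\Eop(y^0+Su)=b_0+\Gop u$ and $\Eop y^0=b_0$, so $S=\Eop^{-1}\Gop$. Then $S^*=\Gop^*(\Eop^{-1})^*=\Gop^*(\Eop^*)^{-1}$. From \eqref{eq:adjoint-def}, $(\Eop^*)^{-1}(j'(y)^*x)=-p$, which gives the identity. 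I would also check that the identification $\Wsp\cong\Wsp^*$ and $U\cong U^*$ is used consistently, so that $\lambda\in U$ represents an element of $N_{\Uad}(u)\subset U^*$.
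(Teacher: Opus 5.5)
Your proposal is correct and follows essentially the same route as the paper. You reduce to the reduced inclusion \eqref{eq:reduced-incl}, pass to \eqref{eq:kkt} by introducing the adjoint via \eqref{eq:adjoint-def} using that $\Eop^*$ is an isomorphism, and get uniqueness from Theorem~\ref{thm:lower-existence-uniqueness}. Your explicit factorization $S=\Eop^{-1}\Gop$ (from $\Eop y^0=b_0$), giving $S^*=\Gop^*(\Eop^*)^{-1}$, justifies the identity $S^*j'(y)^*x=-\Gop^*p$ slightly more cleanly than the paper's ``state part of $\Eop^{-1}$'' phrasing.
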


\begin{proof}
The reduced inclusion \eqref{eq:reduced-incl} characterizes the global minimizer of
the convex problem \eqref{eq:Pred} without any constraint qualification,
because it is the optimality condition for minimizing the continuous convex function
$\Phi_x$ over the closed convex set $\Uad$ (equivalently, $0\in\partial\Phi_x(u_x)+
N_{\Uad}(u_x)$, and the subdifferential sum rule applies as $\Phi_x$ is
finite-valued and continuous). By Theorem~\ref{thm:lower-existence-uniqueness} the
minimizer $u_x$ is unique. The passage between \eqref{eq:reduced-incl} and
\eqref{eq:kkt} is the introduction of the adjoint $p$ via \eqref{eq:adjoint-def},
which is an equivalence since $\Eop^*$ is injective on $\Wsp^*$; thus \eqref{eq:kkt}
holds at $(y_x,u_x)$ with a unique $p$ and the corresponding $\lambda$, and
conversely any solution of \eqref{eq:kkt} yields \eqref{eq:reduced-mult} and hence
$u=u_x$. The equivalence of the two single-level problems follows.
\end{proof}

For box constraints, $N_{\Uad}(u)$ is characterized pointwise
(Lemma~\ref{lem:box-normal}): a.e.\ and componentwise, $\lambda(t)\le0$ on
$\{u=u_a\}$, $\lambda(t)=0$ on $\{u_a<u<u_b\}$, $\lambda(t)\ge0$ on $\{u=u_b\}$.
Writing $\lambda=\lambda^b-\lambda^a$ with $\lambda^a,\lambda^b\ge0$ and
$a:=u-u_a$, $b:=u_b-u$, this is equivalent to
\begin{equation}
\label{eq:cc}
  a\ge0,\ \lambda^a\ge0,\ a\lambda^a=0,\qquad
  b\ge0,\ \lambda^b\ge0,\ b\lambda^b=0,
\end{equation}
a.e.\ and componentwise. By Lemma~\ref{lem:kkt-equiv}, the inverse problem is thus an
MPCC in function space.

\subsection{Scholtes relaxation}
\label{subsec:scholtes}
For $\varepsilon>0$, the Scholtes relaxation replaces the equalities in \eqref{eq:cc}
by
\begin{equation}
\label{eq:scholtes}
  a\lambda^a\le\varepsilon,\qquad b\lambda^b\le\varepsilon,\qquad
  a,b,\lambda^a,\lambda^b\ge0,
\end{equation}
giving a family of relaxed nonlinear programs. This relaxation is used only for the
stationarity analysis, not for the value-function exactness result of
Section~\ref{sec:value}.

\begin{remark}[Function-space status of the relaxation]
\label{rem:scholtes-fnspace}
The pointwise products $a\lambda^a$, $b\lambda^b$ in \eqref{eq:scholtes} are delicate
in $L^2$: a product of two $L^2$ functions is in general only in $L^1$, so the smooth
nonlinear-programming calculus underlying the relaxed KKT system is not automatic in
the $L^2$ setting. Two standard remedies make \eqref{eq:scholtes} a well-posed family
of regular programs: (i) impose $L^\infty$ regularity on the relevant quantities --
e.g.\ $\lambda^a,\lambda^b\in L^\infty$, which holds when $j$ and the data are
sufficiently regular -- so that the complementarity map $(a,\lambda^a)\mapsto
a\lambda^a$ is well defined and differentiable into $L^2$; or (ii) treat the relaxed
KKT system itself as a hypothesis, as we do in Assumption~\ref{ass:mpcc-mfcq}. We
adopt (ii): the stationarity theorem below is formulated under the assumed existence
of relaxed KKT systems with multipliers in the stated function spaces. This is the
principal structural hypothesis of the C-stationarity result, and we flag it as the
main point requiring case-by-case verification in applications.
\end{remark}

\subsection{C-stationarity}
\label{subsec:cstationarity}
For a fixed representative of $\bar u$, define (up to null sets) the active and
inactive sets
\[
  I^a:=\{t:\bar u(t)=u_a(t)\},\quad
  I^b:=\{t:\bar u(t)=u_b(t)\},\quad
  I^0:=\{t:u_a(t)<\bar u(t)<u_b(t)\},
\]
and let the biactive set be
$\mathcal B:=\{t\in I^a\cup I^b:\ \bar\lambda(t)=0\}$, all understood a.e.\ and
componentwise. Figure~\ref{fig:control_trajectory} shows three subsets $I^a$, $I^b$ and $I^0$.

\begin{figure}[htbp]
    \centering
    \includegraphics[width=0.8\textwidth]{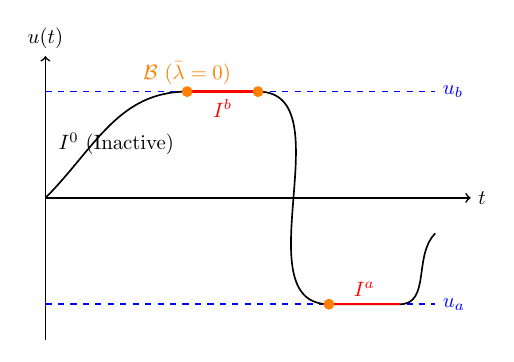}
    \caption{Illustration of a lower-level optimal control trajectory $u(t)$ subject to box constraints $u_a$ and $u_b$. The domain is partitioned into the inactive set $I^0$ where the control is strictly within the bounds, and the strongly active sets $I^a$ and $I^b$ where the control hits the lower and upper bounds, respectively. The orange markers denote the biactive points $\mathcal{B}$, where the control contacts the threshold with a vanishing multiplier ($\bar{\lambda}=0$).}
    \label{fig:control_trajectory}
\end{figure}

\begin{definition}[C-stationarity]
\label{def:cstationarity}
A feasible point $(\bar x,\bar y,\bar u)$ of \eqref{eq:IOC} is C-stationary if
there exist multipliers $\bar z\in\R^n$, lower-level and upper-level adjoints
$\bar p,\bar\eta\in\Wsp^*$, a linearized state $\bar\mu\in Y$, a linearized control
$\bar w\in U$, and $\bar\xi\in U$ such that the lower-level system \eqref{eq:kkt}
holds at $(\bar x,\bar y,\bar u,\bar p,\bar\lambda)$ and
\[
  0=F_x(\bar x,\bar y,\bar u)+\bar z+j'(\bar y)\bar\mu,\qquad
  0=F_y(\bar x,\bar y,\bar u)+\Eop^*\bar\eta+j''(\bar y)[\bar\mu]\bar x,
\]
\[
  0=F_u(\bar x,\bar y,\bar u)+\sigma\bar w-\Gop^*\bar\eta+\bar\xi,\qquad
  \bar z\in N_{\Xad}(\bar x),\qquad \Eop\bar\mu-\Gop\bar w=0,
\]
together with the complementarity sensitivity conditions, a.e.\ and componentwise,
\[
  \bar\xi=0\ \text{on }I^0,\qquad
  \bar w=0\ \text{on }\{\bar\lambda\neq0\},\qquad
  \bar\xi(t)\bar w(t)\ge0\ \text{on }\mathcal B .
\]
\end{definition}

\begin{remark}
This is C-stationarity, not strong stationarity. Strong stationarity would require
additional sign information ($\bar\xi\ge0$ and $\bar w$ of a definite sign) on the
biactive set $\mathcal B$, which Scholtes relaxation does not deliver in general; see
the stationarity hierarchy in Appendix~\ref{app:mpcc-background} and
\cite{scheel2000mathematical,hoheisel2013theoretical,yu2026pattern}.
\end{remark}

\begin{assumption}[Existence and uniform boundedness of relaxed KKT multipliers]
\label{ass:mpcc-mfcq}
Let $\varepsilon_k\downarrow0$ and let local minimizers of the Scholtes-relaxed
problems \eqref{eq:scholtes} converge to a feasible point
$(\bar x,\bar y,\bar u,\bar p,\bar\lambda)$. We assume that, for each $k$, the relaxed
local minimizer satisfies a KKT system in the stated function spaces, with multipliers
\[
  z_{\varepsilon_k}\in\R^n,\quad
  \eta_{\varepsilon_k}\in\Wsp^*,\quad
  \mu_{\varepsilon_k}\in Y,\quad
  w_{\varepsilon_k},\xi_{\varepsilon_k}\in U=L^2,\quad
  \rho^a_{\varepsilon_k},\rho^b_{\varepsilon_k}\in L^2(0,T;\R^m)_{\ge0},
\]
and that these multipliers are bounded uniformly in $k$. Here $\bar p,\eta\in
\Wsp^*\cong\Wsp$ are the lower- and upper-level adjoints associated with the state
constraint $\Eop y-\Gop u=b_0$ (so that $\Eop^*\eta\in Y^*$, $\Gop^*\eta\in U^*$),
and $\rho^a,\rho^b$ are the nonnegative multipliers of the relaxed complementarity
inequalities in \eqref{eq:scholtes}.
\end{assumption}

\begin{remark}[On the underlying constraint qualification]
\label{rem:mpcc-mfcq}
Assumption~\ref{ass:mpcc-mfcq} bundles two distinct hypotheses, neither of which is a
property of the Scholtes relaxation itself: (a) the relaxed local minimizers admit KKT
multipliers in the stated function spaces, and (b) those multipliers are bounded
uniformly in $k$. Even though the relaxation produces a regular NLP for each fixed
$\varepsilon_k$ in finite dimensions, in the $L^2$ setting the existence of KKT
multipliers requires the function-space calculus discussed in
Remark~\ref{rem:scholtes-fnspace}, and the uniform bound (b) requires a constraint
qualification -- an MPCC--MFCQ in finite dimensions, or its function-space analogues
for control-constrained MPECs \cite{hintermuller2014several,mehlitz2016weak,deng2021coupled,yu2026age}.
In the decoupled scalar setting of Section~\ref{subsec:scalar-sc} the
bound-complementarity strict-complementarity component of such a qualification holds
automatically (Corollary~\ref{cor:decoupled-two-agent-sc-app}).
\end{remark}

\begin{theorem}[C-stationarity of relaxation limits]
\label{thm:cstationarity}
Let $(\bar x,\bar y,\bar u)$ be a local minimizer of \eqref{eq:IOC}, and let
$\{(x_{\varepsilon_k},y_{\varepsilon_k},u_{\varepsilon_k})\}$ be a sequence of local
minimizers of the Scholtes-relaxed problems \eqref{eq:scholtes} with
$\varepsilon_k\downarrow0$ that converges to $(\bar x,\bar y,\bar u)$ (together with
the associated lower-level multipliers). Under
Assumptions~\ref{ass:A}--\ref{ass:D} and the uniform multiplier boundedness
condition of Assumption~\ref{ass:mpcc-mfcq}, the limit point
$(\bar x,\bar y,\bar u)$ is C-stationary in the sense of
Definition~\ref{def:cstationarity}.
\end{theorem}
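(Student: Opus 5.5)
The plan is to write down the KKT system of each Scholtes-relaxed program, which exists by Assumption~\ref{ass:mpcc-mfcq}, and pass to the limit $k\to\infty$ using weak compactness of the uniformly bounded multipliers. The limits of the multiplier equations are linear in the multipliers, so they survive the limit. The sign conditions on the biactive set are then recovered pointwise from the relaxed complementarity conditions.

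For fixed $k$ the relaxed program has variables $(x,y,u,p,\lambda^a,\lambda^b)$. Its constraints are $x\in\Xad$, the state equation, the adjoint equation $\Eop^*p+j'(y)^*x=0$, the gradient equation $\sigma u-\Gop^*p+\lambda^b-\lambda^a=0$, and \eqref{eq:scholtes}. We attach the multipliers $z_k\in N_{\Xad}(x_k)$, $\eta_k$ (state equation), $\mu_k$ (adjoint equation), $w_k$ (gradient equation), $\rho^a_k,\rho^b_k\ge0$ (products $a\lambda^a\le\varepsilon_k$, $b\lambda^b\le\varepsilon_k$), and nonnegative sign multipliers for $a,b,\lambda^a,\lambda^b$. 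Differentiating the Lagrangian gives the following:
\begin{itemize}
\item in $x$: $F_x+z_k+j'(y_k)\mu_k=0$;
\item in $y$: $F_y+\Eop^*\eta_k+j''(y_k)[\mu_k]x_k=0$;
\item in $p$: $\Eop\mu_k-\Gop w_k=0$;
\item in $u$: $F_u+\sigma w_k-\Gop^*\eta_k+\xi_k=0$, where $\xi_k$ collects $\rho^a_k\lambda^a_k-\rho^b_k\lambda^b_k$ and the sign multipliers of $a,b$;
\item in $\lambda^a,\lambda^b$: $\pm w_k+\rho^a_k a_k-\nu^a_k=0$ and its analogue for $b$.
\end{itemize}
Signs must be fixed once consistently with Definition~\ref{def:cstationarity}; this is routine bookkeeping.

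By uniform boundedness and reflexivity, after a subsequence we have $z_k\to\bar z$, $\eta_k\rightharpoonup\bar\eta$, $\mu_k\rightharpoonup\bar\mu$ in $Y$, and $w_k,\xi_k\rightharpoonup\bar w,\bar\xi$ in $L^2$. The primal variables converge strongly by hypothesis. In the $x$- and $y$-equations the products $j'(y_k)\mu_k$ and $j''(y_k)[\mu_k]x_k$ converge weakly, because $j'(y_k)\to j'(\bar y)$ and $j''(y_k)\to j''(\bar y)$ in operator norm ($j\in C^2$), and a norm-convergent operator sequence applied to a weakly convergent sequence converges weakly. Closedness of $N_{\Xad}$ gives $\bar z\in N_{\Xad}(\bar x)$. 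The linear equation $\Eop\bar\mu-\Gop\bar w=0$ passes to the limit trivially. The lower-level system \eqref{eq:kkt} holds at the limit by strong convergence, since the relaxed constraints tend to \eqref{eq:cc}. Limits of $F$-derivatives follow from the explicit quadratic form of $F$ and $R\in C^1$.

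The main obstacle is the complementarity sensitivity conditions, because weak limits do not commute with pointwise products. I would proceed in three steps.
\begin{itemize}
\item On $I^0$: $a_k,b_k\ge c>0$ eventually on large subsets (Egorov, after strong and hence a.e.\ subsequential convergence). There $\lambda^a_k,\lambda^b_k\le\varepsilon_k/c\to0$ and the sign multipliers vanish, so $\xi_k\to0$ on those subsets and $\bar\xi=0$ on $I^0$.
\item On $\{\bar\lambda\ne0\}$: $\lambda^a_k$ (say) stays bounded below, forcing $a_k\le\varepsilon_k/\lambda^a_k\to0$ and the sign multiplier $\nu^a_k=0$. Then $w_k=\rho^a_k a_k$, which tends to $0$ in $L^1$ since $\rho^a_k$ is bounded in $L^2$ and $a_k\to0$ in $L^2$. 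Hence $\bar w=0$ there.
\item On $\mathcal B$: show $\xi_kw_k\ge0$ pointwise up to terms vanishing in $L^1$. Using the structure $w_k=\rho^a_ka_k-\nu^a_k$ and $\xi_k\approx\rho^a_k\lambda^a_k$, the product has the sign of $(\rho^a_k)^2a_k\lambda^a_k\ge0$ when the $\lambda^a$ constraint is inactive. Weak convergence alone does not preserve the sign of a product, so I would integrate against nonnegative test functions supported in $\mathcal B$ and argue as in \cite{mehlitz2018limiting,hintermuller2014several}: decompose $\mathcal B$ into the parts where $\bar w>0$, $\bar w<0$, and $\bar w=0$, and show via Mazur-type convex combinations (which preserve the sign structure of $\xi_k$ on sets where $w_k$ has a persistent sign) that $\bar\xi$ has the matching sign there.
\end{itemize}
This last step is where I expect the real work, and if it fails in full generality I would strengthen the convergence of $w_k$ to strong $L^2$ convergence using the compactness of $\Gop^*\eta_k$ (an $H^1$-adjoint, hence compact into $L^2$) through the $u$-equation.
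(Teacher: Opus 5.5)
Your route is the paper's: a relaxed KKT system from Assumption~\ref{ass:mpcc-mfcq}, weak limits in the linear stationarity equations, then a three-part sign analysis. Your arguments on $I^0$ and on $\{\bar\lambda\neq0\}$ are correct, and more explicit than the paper's, which defers $\bar\xi=0$ on $I^0$ to the literature and only asserts $\bar w=0$. On Egorov sets where $a_k\ge c$ (resp.\ $\lambda^a_k\ge c$), the partner variable is at most $\varepsilon_k/c$ uniformly and the sign multipliers vanish. The uniform $L^2$ bound on $\rho^a_k$ then makes $\xi_k$ (resp.\ $w_k$) tend to $0$ strongly there.

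\textbf{The gap: the biactive step.} Your Mazur plan cannot close it. You do get the exact pointwise sign $\xi^{(a)}_kw_k\ge0$ for each $k$. If $\rho^a_k>0$, then $a_k,\lambda^a_k>0$, the sign multipliers vanish, and both factors are positive. If $\rho^a_k=0$, both factors are minus a sign multiplier. But $\{(s,r)\in\R^2:\ sr\ge0\}$ is a union of two quadrants whose convex hull is $\R^2$, so neither weak limits nor convex combinations preserve it. Moreover, weak convergence never produces sets on which $w_k$ has a ``persistent sign''. Concretely, take $s_k(t):=\operatorname{sign}\sin(2^k\pi t)$ on an interval and set $\xi_k=1+2s_k$, $w_k=-1+2s_k$. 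Then $\xi_kw_k\equiv3$, yet $\xi_k\rightharpoonup1$ and $w_k\rightharpoonup-1$. This is the phenomenon behind \cite{mehlitz2018limiting}: limiting normals to pointwise complementarity sets in $L^2$ are dense in the Clarke normal cone, so weak-limit arguments alone give only weak stationarity. The paper's proof rests on the same citation at exactly this point, so it does not fill the hole for you.

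\textbf{Your fallback contains the missing idea but is misstated.} The $u$-equation reads $\sigma w_k+\xi_k=\Gop^*\eta_k-F_u(x_k,y_k,u_k)$. Compactness of $\Gop^*\eta_k$ therefore gives strong convergence of the \emph{sum} $\sigma w_k+\xi_k$, not of $w_k$; in the example above it is precisely this sum that oscillates. The sum is nevertheless enough:
\begin{enumerate}
\item On the lower-biactive part of $\mathcal B$ (where $u_a<u_b$), split $\xi_k=\xi^{(a)}_k+r_k$. Here $r_k\to0$ strongly, by your $I^0$ argument applied to the $b$-pair.
\item Then $g_k:=\sigma w_k+\xi^{(a)}_k$ converges strongly, and its two summands have the same sign pointwise.
\item Hence you can write $\sigma w_k=\theta_kg_k$ and $\xi^{(a)}_k=(1-\theta_k)g_k$ with measurable $\theta_k\in[0,1]$.
\item Pass to a weak-$*$ limit $\theta_k\rightharpoonup^*\bar\theta$ in $L^\infty$. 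Together with $g_k\to\bar g$ in $L^2$, this gives $\sigma\bar w=\bar\theta\bar g$ and $\bar\xi=(1-\bar\theta)\bar g$.
\item Therefore $\sigma\bar\xi\bar w=\bar\theta(1-\bar\theta)\bar g^2\ge0$.
\end{enumerate}

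However, compactness of $\Gop^*\eta_k$ is not automatic under Assumptions~\ref{ass:A}--\ref{ass:D}. You have $\eta_k=-(\Eop^*)^{-1}\bigl(F_y+j''(y_k)[\mu_k]x_k\bigr)$, and $\mu_k$ converges only weakly in $Y$. So you need something like compactness of $j''(\bar y)\in\Lc(Y,Y^*)$. This holds for $L^2$-tracking features such as those of the two-agent model, but fails e.g.\ for $j(y)=\tfrac12\norm{\dot y}_{L^2}^2$. As written, your argument yields $\bar\xi\bar w\ge0$ on $\mathcal B$ only under such an extra compactness hypothesis, or under a direct strong-convergence assumption on $w_k$ or $\xi_k$.
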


\begin{proof}
By Lemma~\ref{lem:kkt-equiv}, near the local minimizer $(\bar x,\bar y,\bar u)$ the
problem \eqref{eq:IOC} is equivalent to the single-level KKT-constrained problem --
this equivalence rests on convexity of the lower level, which makes \eqref{eq:kkt}
sufficient as well as necessary. The relaxed local minimizers
$(x_{\varepsilon_k},y_{\varepsilon_k},u_{\varepsilon_k})$ of the localized Scholtes
relaxation \eqref{eq:scholtes} are given by hypothesis.

\emph{Step 1: relaxed KKT.} By Assumption~\ref{ass:mpcc-mfcq}, for each $\varepsilon_k$
the relaxed local minimizer satisfies a KKT system with multipliers
$(z_{\varepsilon_k},\eta_{\varepsilon_k},\mu_{\varepsilon_k},w_{\varepsilon_k},
\xi_{\varepsilon_k},\rho^a_{\varepsilon_k},\rho^b_{\varepsilon_k})$ in the stated
spaces, bounded uniformly in $k$. Here $\xi_{\varepsilon_k}$ collects the
bound-multiplier sensitivities and $w_{\varepsilon_k}$ the primal (control)
sensitivity, and the relaxed complementarity contributes, through \eqref{eq:scholtes},
terms of the form $\rho^a_{\varepsilon_k}\lambda^a$ and $\rho^a_{\varepsilon_k}a$ (and
the analogues for $b$) into the stationarity equations for $a$ and $\lambda^a$.

\emph{Step 2: weak limits.} By Assumption~\ref{ass:mpcc-mfcq} the multipliers are
uniformly bounded, so after extracting a subsequence,
$z_{\varepsilon_k}\to\bar z$ in $\R^n$, $\eta_{\varepsilon_k}\rightharpoonup\bar\eta$ in
$\Wsp^*$, $\mu_{\varepsilon_k}\rightharpoonup\bar\mu$ in $Y$, and
$w_{\varepsilon_k}\rightharpoonup\bar w$, $\xi_{\varepsilon_k}\rightharpoonup\bar\xi$ in
$L^2$. Passing to the limit in the (linear) stationarity equations gives
\[
  0=F_x+\bar z+j'(\bar y)\bar\mu,\quad
  0=F_y+\Eop^*\bar\eta+j''(\bar y)[\bar\mu]\bar x,\quad
  0=F_u+\sigma\bar w-\Gop^*\bar\eta+\bar\xi,
\]
with $\bar z\in N_{\Xad}(\bar x)$ (closedness of the finite-dimensional normal cone)
and $\Eop\bar\mu-\Gop\bar w=0$. The lower-level KKT system \eqref{eq:kkt} passes to
the limit by closedness of the box normal-cone relations under the a.e.\ pointwise
characterization of Lemma~\ref{lem:box-normal}.

\emph{Step 3: sign conditions.}
\begin{itemize}
\item \emph{Inactive set.} Fix $\delta>0$ and consider the localized inactive set
$I^0_\delta:=\{t:\ u_a(t)+\delta\le\bar u(t)\le u_b(t)-\delta\}$. On $I^0_\delta$ both
gaps satisfy $a,b\ge\delta>0$. The relaxed feasibility $a\lambda^a\le\varepsilon_k$,
$b\lambda^b\le\varepsilon_k$ then forces $\lambda^a\le\varepsilon_k/\delta\to0$ and
$\lambda^b\le\varepsilon_k/\delta\to0$, so the limiting lower-level multiplier
satisfies $\bar\lambda=0$ a.e.\ on $I^0_\delta$, consistent with the normal-cone
characterization (Lemma~\ref{lem:box-normal}). For the complementarity-sensitivity
multiplier $\bar\xi$, we do not assert that $\rho^a_{\varepsilon_k}$, $\rho^b_{\varepsilon_k}$
vanish (strict slackness is not guaranteed); instead $\bar\xi=0$ a.e.\ on the inactive
set is the inactive-index ($I^{+0}\cup I^{0+}$-type) component of the limiting
Lebesgue-space MPCC stationarity system, which holds under the present uniform
multiplier bound by the normal-cone/sign analysis of \cite{mehlitz2018limiting,liu2026ftu}
together with the finite-dimensional relaxation limits of \cite{scholtes2001convergence,hoheisel2013theoretical,liu2026computational}.
Letting $\delta\downarrow0$ and noting $I^0=\bigcup_{\delta>0}I^0_\delta$ up to a null
set gives $\bar\xi=0$ a.e.\ on $I^0$.
\item \emph{Strongly active set.} On $\{\bar\lambda\neq0\}$ (say $\bar\lambda>0$, the
upper bound) the constraint $b=u_b-u$ stays active with a nonvanishing multiplier;
the relaxed stationarity then forces the primal control sensitivity to vanish,
$\bar w=0$ a.e.\ there.
\item \emph{Biactive set.} On $\mathcal B$ ($a\to0$ and $\lambda^a\to0$, resp.\ $b,
\lambda^b\to0$) the Scholtes sign mechanism applies. With $\rho^a_{\varepsilon_k}\ge0$,
the two stationarity contributions that survive into the limit are linked through the
nonnegative quantity $\rho^a_{\varepsilon_k}\,a\,\lambda^a\ge0$; consequently the
limiting bound-sensitivity $\bar\xi$ and control-sensitivity $\bar w$ inherit a common
sign, i.e.
\[
  \bar\xi(t)\,\bar w(t)\ge0\qquad\text{a.e.\ on }\mathcal B.
\]
The sign relation is the conclusion of the standard relaxation-limit argument: the
finite-dimensional Scholtes multiplier bookkeeping
\cite{scholtes2001convergence,hoheisel2013theoretical,yu2026chemotactic} yields the Clarke
product sign on the biactive set, and its pointwise a.e.\ transfer to the
Lebesgue-space setting uses the variational geometry of pointwise-defined
complementarity sets in \cite{mehlitz2018limiting,yu2026fullcovariance} (extraction of an a.e.\ convergent
subsequence and Mazur's lemma to preserve the sign in the weak limit). We invoke these
results rather than reproving the full function-space statement here.
\end{itemize}
These are exactly the conditions of Definition~\ref{def:cstationarity}.
\end{proof}

\begin{remark}
We do not assert that every local minimizer of \eqref{eq:IOC} is
C-stationary. The conclusion is for limits of Scholtes-relaxed local minimizers
carrying uniformly bounded multipliers; whether an arbitrary local minimizer arises
as such a limit is a separate question, governed by the constraint qualification
discussed in Remark~\ref{rem:mpcc-mfcq}. This is the standard scope of
relaxation-based stationarity results
\cite{scholtes2001convergence,hoheisel2013theoretical,mehlitz2016weak,wang2025multi}.
\end{remark}

\subsection{Strict complementarity in the decoupled scalar case}
\label{subsec:scalar-sc}
We record a scalar result useful for checking the MPCC constraint qualification in a
transparent setting. Consider
\[
  \min_{u\in[u_a,u_b]}\int_0^T\Bigl(cy(t)^2+\frac\sigma2 u(t)^2\Bigr)dt
  \quad\text{s.t.}\quad \dot y=ay+bu,\ y(0)=y_0,
\]
with $a\in\R$, $b\neq0$, $c\ge0$, $\sigma>0$, $u_a<0<u_b$.

\begin{theorem}[Unconditional finiteness and a.e.\ strict complementarity]
\label{thm:scalar-sc}
For this decoupled scalar constrained LQ problem, the threshold-contact set
$\{\rho=u_a\}\cup\{\rho=u_b\}$ is finite. Consequently the control can enter or leave
a bound only at the finitely many threshold-contact times, and the biactive set has
Lebesgue measure zero, so strict complementarity holds almost everywhere, in the
sense relevant for the $L^2$ normal cone. No nondegeneracy condition is required.
Hence, in the two-agent model of Section~\ref{subsec:two-agent} with $x_1=0$, strict
complementarity holds a.e.\ for every admissible datum.
\end{theorem}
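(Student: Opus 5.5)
The plan is to work with the optimality system of the scalar problem and identify $\rho$ as the unconstrained feedback quantity. The adjoint is $-\dot p = ap - 2cy$ with $p(T)=0$ (up to sign conventions fixed by \eqref{eq:kkt}). The optimal control is $u=\proj_{[u_a,u_b]}(\rho)$ with $\rho:=bp/\sigma$, and the multiplier is $\lambda=\sigma(\rho-u)$. By Lemma~\ref{lem:box-normal} this gives $\lambda>0$ on $\{\rho>u_b\}$, $\lambda<0$ on $\{\rho<u_a\}$, and $\lambda=0$ elsewhere. The biactive set is therefore contained in $\{\rho=u_a\}\cup\{\rho=u_b\}$, so it suffices to show this threshold-contact set is finite. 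Finiteness then also gives that $u$ can switch between interior and bound regimes only at these times, since $\rho$ is continuous.

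For finiteness, I would use that $(y,p)$ solves a piecewise-analytic Hamiltonian ODE. On any open interval where $\rho\in(u_a,u_b)$, the pair $(y,p)$ solves the linear system $\dot y=ay+(b^2/\sigma)p$, $\dot p=-ap+2cy$. On intervals where $u\equiv u_a$ or $u\equiv u_b$, it solves the same linear system with a constant forcing. In each regime $\rho$ is real-analytic, hence on each regime interval either it is identically constant or its level sets are locally finite.

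The main obstacle is excluding accumulation of contact times, for instance infinitely many regime switches chattering toward some $t^*$, and excluding $\rho\equiv u_b$ on an interval. For the latter, I would note that $\rho\equiv u_b\neq0$ forces $p$ to be constant and nonzero. The adjoint equation then gives $2cy=ap$ constant, hence $\dot y=0$, so $ay+bu_b=0$. Combined with $2cy=ap$, this yields an explicit algebraic condition that must be checked against the unconstrained dynamics at that constant level, $\dot y=ay+b^2p/\sigma=ay+bu_b=0$. This is consistent, so it is not excluded by this argument alone. I would then use the terminal condition: the analytic continuation of that equilibrium up to $T$ contradicts $p(T)=0\neq \sigma u_b/b$, unless a switch occurs. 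Handling this cleanly needs a Lipschitz/analytic uniqueness step on the Hamiltonian flow.

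For chattering, I would use a phase-plane argument. The closed-loop system $\dot y=ay+b\,\proj(bp/\sigma)$, $\dot p=-ap+2cy$ is a Lipschitz planar autonomous ODE. Its trajectory crosses the lines $p=\sigma u_a/b$ and $p=\sigma u_b/b$ transversally except at points where $\dot p=0$, i.e.\ where $2cy=ap$, which is a single point on each line. A planar autonomous Lipschitz trajectory on a compact time interval meets each such line either transversally, giving isolated contacts by the implicit function theorem, or at the tangency point. At the tangency point, analyticity of the two regime flows near it gives finite order of contact, since the trajectory is not the equilibrium, by the terminal condition above. Hence there are finitely many contacts on $[0,T]$ by compactness. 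Separately, if $c=0$ then $p$ solves $\dot p=-ap$ with $p(T)=0$, so $p\equiv0$ and there is no contact because $u_a<0<u_b$.

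The two-agent consequence follows directly from the separation in Section~\ref{subsec:two-agent} with $c=\tfrac12x_2c_i$.
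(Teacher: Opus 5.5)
Your reduction matches the paper's. The formula $\lambda=\sigma(\rho-u)$ puts the biactive set inside $\{\rho=u_a\}\cup\{\rho=u_b\}$, the case $c=0$ is trivial, and transversal zeros of $g:=p-\sigma u_b/b$ are isolated. Everything therefore hinges on the single tangency point $y^*=a\sigma u_b/(2bc)$ on each threshold line, and that is exactly where your argument has a gap.

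First, your assertion that $p\equiv\sigma u_b/b$, $2cy=ap$, $ay+bu_b=0$ are ``consistent'' is false. Substituting $y^*$ gives $ay^*+bu_b=u_b(a^2\sigma+2b^2c)/(2bc)\neq0$, and likewise for $u_a$, using $u_a\neq0$. Comparing with the unconstrained field $ay+b^2p/\sigma$ checks nothing, since the two fields coincide on the threshold line. This identity is the entire content of the paper's proof. Your terminal-condition detour can be made rigorous: by uniqueness for the Lipschitz autonomous closed-loop field, a trajectory through an equilibrium is constant on $[0,T]$, contradicting $p(T)=0$, so ``unless a switch occurs'' cannot happen. But the detour is unnecessary, because the closed-loop field has no zero on either threshold line.

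Second, the tangency step (``analyticity of the two regime flows gives finite order of contact, since the trajectory is not the equilibrium'') does not show that the zero at $y^*$ is isolated. Near that point the trajectory is a concatenation of pieces of the two analytic flows, with a junction at every zero of $g$. Analyticity of each piece does not by itself prevent the junctions from accumulating, and that accumulation is precisely the chattering you set out to exclude. Nor does being a non-equilibrium point force finite-order contact for a general field and line.

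The missing step is a second-derivative computation. Since $u$ is continuous, $p\in C^2$, and because $\dot p=-ap+2cy$ is the same in both regimes, $\ddot p=-a\dot p+2c\dot y$. At a tangential contact $\dot p=0$ and $\dot y=ay^*+bu_b\neq0$ by the identity above, so $\ddot p\neq0$ and the zero is isolated. This is the paper's twofold-Rolle contradiction ($\rho=u_b$, $\dot\rho=\ddot\rho=0$ is impossible) in direct form; with it, the analyticity and phase-plane machinery become superfluous.
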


\begin{proof}
See Appendix~\ref{app:scalar-sc} (Theorem~\ref{thm:scalar-sc-app} and
Corollary~\ref{cor:decoupled-two-agent-sc-app}). In outline: the switching function
$\rho:=-(b/\sigma)p$ is $C^2$; if $c=0$ then $p\equiv0$ and no threshold contact
occurs; if $c>0$, an accumulation point of a threshold-contact set $\{\rho=u_a\}$ or
$\{\rho=u_b\}$ would force, via a twofold Rolle argument, $\rho=u_b$, $\dot\rho=0$,
$\ddot\rho=0$ simultaneously, which overdetermines the state--costate system and
yields the impossible identity $a^2\sigma+2b^2c=0$. Hence the threshold-contact set is
finite; the control can enter or leave a bound only at these finitely many times, and
since the biactive set $\mathcal B$ is contained in the threshold-contact set,
$\operatorname{meas}(\mathcal B)=0$.
\end{proof}

\appendix

\section{Auxiliary Hilbert-Space Facts}
\label{app:hilbert}
This appendix collects the elementary functional-analytic facts used in
Sections~\ref{sec:wellposed}--\ref{sec:mpcc}, making explicit where boundedness of the
admissible control set enters the direct method.

\subsection{Weak compactness of the admissible controls}
With $U=L^2(0,T;\R^m)$ and $\Uad=\{u:\ u_a\le u\le u_b\ \text{a.e.}\}$,
$u_a,u_b\in L^\infty$, $u_a\le u_b$ a.e.:

\begin{lemma}[Basic properties of the box constraint]
\label{lem:box-basic}
$\Uad$ is nonempty, closed, convex, and bounded in $U$, and hence weakly sequentially
compact.
\end{lemma}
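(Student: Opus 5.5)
The statement to prove is Lemma~\ref{lem:box-basic}. I would verify the four properties of $\Uad$ directly and then invoke the Eberlein--\v{S}mulian / Banach--Alaoglu mechanism in the Hilbert space $U=L^2(0,T;\R^m)$.

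\emph{Nonemptiness.} Since $u_a\le u_b$ a.e., the function $u_a$ itself (which lies in $L^\infty\subset L^2$ on the bounded interval $(0,T)$) belongs to $\Uad$.

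\emph{Convexity.} The defining inequalities are pointwise and componentwise. For $u,v\in\Uad$ and $\theta\in[0,1]$ we have $u_a\le\theta u+(1-\theta)v\le u_b$ a.e. This holds off the union of the two null exceptional sets.

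\emph{Closedness.} Let $u_k\to u$ in $U$. After passing to a subsequence, $u_k(t)\to u(t)$ a.e. Hence the inequalities $u_a(t)\le u_k(t)\le u_b(t)$, which hold off a countable union of null sets, pass to the limit a.e.

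\emph{Boundedness.} For $u\in\Uad$, each component satisfies $|u_i(t)|\le\max\{|u_{a,i}(t)|,|u_{b,i}(t)|\}$ a.e. This gives
\[
  \norm{u}_U\le\sqrt{T}\,\bigl(\norm{u_a}_{L^\infty}+\norm{u_b}_{L^\infty}\bigr)
\]
up to a dimensional constant.

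\emph{Weak sequential compactness.} A closed convex set in a normed space is weakly closed by Mazur's lemma (equivalently, by the Hahn--Banach separation theorem). In the reflexive, indeed Hilbert, space $U$, every bounded sequence has a weakly convergent subsequence; in a Hilbert space this follows elementarily by a diagonal argument on the closed span together with the Riesz representation. So any sequence in $\Uad$ has a subsequence converging weakly to some $u\in U$. Weak closedness then gives $u\in\Uad$.

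The only step with any subtlety is the passage from norm-closedness to weak closedness. I would handle it via Mazur's lemma: convex combinations of the tail of the weakly convergent sequence converge strongly, they stay in $\Uad$ by convexity, and the limit stays in $\Uad$ by norm-closedness. Everything else is routine.
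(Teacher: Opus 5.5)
Your proof is correct and follows the paper's argument step for step. That is: convexity from the pointwise constraints, closedness via an a.e.\ convergent subsequence, the $L^\infty$-based $L^2$ bound, and weak sequential compactness from reflexivity together with weak closedness of closed convex sets. Your only additions are the explicit check that $u_a\in\Uad$ (the paper takes $\Uad\neq\varnothing$ as a standing assumption) and the Mazur-lemma justification of weak closedness, which the paper leaves inside the standard theorem on bounded closed convex sets in reflexive spaces.
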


\begin{proof}
Convexity is immediate from the componentwise interval constraints. Closedness: if
$u_k\to u$ in $L^2$, a subsequence converges a.e., and the pointwise inequalities pass
to the limit. Boundedness: every $u\in\Uad$ satisfies
$|u(t)|\le|u_a(t)|+|u_b(t)|$ a.e., so
$\norm{u}_{L^2}\le T^{1/2}(\norm{u_a}_{L^\infty}+\norm{u_b}_{L^\infty})$. Since $U$ is a
Hilbert space, it is reflexive, and a bounded closed convex set in a reflexive Banach
space is weakly sequentially compact.
\end{proof}

\subsection{Control-to-state map}
\begin{lemma}[Affine control-to-state map]
\label{lem:control-to-state}
For every $u\in U$ there is a unique $y_u\in Y=H^1(0,T;\R^d)$ solving the state
equation, and $\Sop:u\mapsto y_u$ is affine and continuous: $\Sop u=y^0+Su$ with
$S\in\Lc(U,Y)$.
\end{lemma}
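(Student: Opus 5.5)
We reduce the claim to the classical well-posedness theory of linear ODEs with $L^2$ forcing. Set $y^0:=$ the solution with $u=0$ (the free response $y^0(t)=e^{t\Aop}y_0$) and, for $u\in U$, define the variation-of-constants formula
\[
  (Su)(t):=\int_0^t e^{(t-s)\Aop}\Bop u(s)\,ds .
\]
We will show that $y_u:=y^0+Su$ lies in $Y$ and solves the state equation, that the solution is unique, and that $S$ is linear and bounded $U\to H^1(0,T;\R^d)$. Affinity of $\Sop$ is then immediate from the decomposition, and continuity follows from boundedness of $S$.

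\emph{Existence and regularity.} Since $\Bop u\in L^2(0,T;\R^d)\subset L^1$ and $s\mapsto e^{(t-s)\Aop}$ is continuous, $Su$ is absolutely continuous. Writing $(Su)(t)=e^{t\Aop}\int_0^t e^{-s\Aop}\Bop u(s)\,ds$ and differentiating the product of a smooth matrix function with an absolutely continuous one gives, for a.e. $t$,
\[
  \frac{d}{dt}(Su)(t)=\Aop(Su)(t)+\Bop u(t).
\]
The free response satisfies $\dot y^0=\Aop y^0$ and $y^0(0)=y_0$. Hence $y_u$ satisfies the state equation a.e. and $y_u(0)=y_0$. For the bounds, let $K:=\max_{0\le\tau\le T}\norm{e^{\tau\Aop}}$. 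Cauchy--Schwarz gives $\norm{Su}_{C([0,T])}\le K\norm{\Bop}\,T^{1/2}\norm{u}_U$, and therefore $\norm{Su}_{L^2}\le K\norm{\Bop}\,T\norm{u}_U$. From the derivative identity, $\norm{\tfrac{d}{dt}Su}_{L^2}\le\norm{\Aop}\norm{Su}_{L^2}+\norm{\Bop}\norm{u}_U$. Together these yield $\norm{Su}_{H^1}\le C\norm{u}_U$ with $C$ depending only on $\Aop,\Bop,T$. Linearity of $S$ is clear, and $y^0\in C^\infty\subset Y$.

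\emph{Uniqueness.} If $y_1,y_2\in Y$ both solve the problem, then $z:=y_1-y_2$ is absolutely continuous with $\dot z=\Aop z$ a.e. and $z(0)=0$. Then $\frac{d}{dt}\bigl(e^{-t\Aop}z(t)\bigr)=0$ a.e., and since this function is absolutely continuous it is constant, hence $z\equiv0$. Alternatively, Gronwall applied to $|z(t)|\le\norm{\Aop}\int_0^t|z|$ gives the same conclusion.

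The only step needing care is justifying the a.e. differentiation of the convolution for merely $L^2$ controls. The factorization $e^{t\Aop}\cdot\int_0^t e^{-s\Aop}\Bop u$ handles this: the integral term is absolutely continuous by the Lebesgue differentiation theorem, and the product rule holds for the product of a $C^1$ matrix function with an absolutely continuous vector function. The same argument shows that $\Eop$ is an isomorphism, a fact used in Section~\ref{sec:mpcc}. To see this, replace $\Bop u$ by a general $f\in L^2$ and $y_0$ by a general $\zeta\in\R^d$; the estimates above then give the bounded inverse $(f,\zeta)\mapsto e^{t\Aop}\zeta+\int_0^te^{(t-s)\Aop}f(s)\,ds$.
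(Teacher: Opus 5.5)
Your proposal is correct and follows the same variation-of-constants route as the paper. The paper simply writes $y_u(t)=e^{t\Aop}y_0+\int_0^t e^{(t-s)\Aop}\Bop u(s)\,ds$ and cites standard linear-ODE estimates for $\norm{Su}_{H^1}\le C\norm{u}_{L^2}$, while you also spell out the a.e.\ differentiation, the explicit $H^1$ bound, and uniqueness (which the paper leaves implicit); one small correction is that absolute continuity of $t\mapsto\int_0^t e^{-s\Aop}\Bop u(s)\,ds$ comes from absolute continuity of the Lebesgue integral, whereas the Lebesgue differentiation theorem gives its a.e.\ derivative.
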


\begin{proof}
By variation of constants,
$y_u(t)=e^{t\Aop}y_0+\int_0^t e^{(t-s)\Aop}\Bop u(s)\,ds.$ The first term defines
$y^0\in H^1$, the second a linear map $S:U\to Y$, and standard linear-ODE estimates
give $\norm{Su}_{H^1}\le C_{\Aop,\Bop,T}\norm{u}_{L^2}$, so $S\in\Lc(U,Y)$.
\end{proof}

\subsection{Weak lower semicontinuity}
\begin{lemma}[Weak l.s.c.\ of convex continuous functionals]
\label{lem:wlsci}
If $X$ is a Hilbert space and $\phi:X\to\R$ is convex and strongly continuous, then
$\phi$ is weakly sequentially lower semicontinuous.
\end{lemma}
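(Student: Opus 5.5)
The plan is to obtain weak sequential lower semicontinuity from Mazur's lemma, which reduces it to the closedness and convexity of sublevel sets. For $\alpha\in\R$, consider the sublevel set $L_\alpha:=\{v\in X:\ \phi(v)\le\alpha\}$.

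First, $L_\alpha$ is convex. If $v,w\in L_\alpha$ and $\theta\in[0,1]$, convexity of $\phi$ gives $\phi(\theta v+(1-\theta)w)\le\theta\phi(v)+(1-\theta)\phi(w)\le\alpha$.

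Second, $L_\alpha$ is strongly closed. It is the preimage of the closed set $(-\infty,\alpha]$ under the strongly continuous map $\phi$.

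Third, $L_\alpha$ is weakly sequentially closed. I would prove this directly with Mazur's lemma rather than quoting a general Hahn--Banach separation theorem. Let $v_k\in L_\alpha$ with $v_k\rightharpoonup v$. Mazur's lemma supplies convex combinations $g_k\in\operatorname{conv}\{v_k,v_{k+1},\dots\}$ with $g_k\to v$ strongly. By convexity of $L_\alpha$ we have $g_k\in L_\alpha$, and by strong closedness $v\in L_\alpha$. In a Hilbert space one could instead use the metric projection onto the closed convex set $L_\alpha$ and the variational inequality it satisfies, if a more self-contained argument is wanted.

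Finally, I would conclude by contradiction. Suppose $v_k\rightharpoonup v$ but $\liminf_k\phi(v_k)<\phi(v)$. Choose $\alpha$ with $\liminf_k\phi(v_k)<\alpha<\phi(v)$, and pass to a subsequence along which $\phi(v_{k})\le\alpha$ for all $k$. This subsequence still converges weakly to $v$, so the third step gives $v\in L_\alpha$, that is, $\phi(v)\le\alpha$. This contradicts the choice of $\alpha$.

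The only nontrivial ingredient is Mazur's lemma in the third step. Everything else is routine, so no step is a real obstacle.
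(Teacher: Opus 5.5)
Your proof is correct and follows the same route as the paper. Convexity and strong closedness of the sublevel sets give their weak (sequential) closedness, which is then turned into weak sequential lower semicontinuity; you simply make explicit, via Mazur's lemma and the subsequence/contradiction argument, the two steps the paper only cites.
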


\begin{proof}
Each sublevel set $\{\phi\le\alpha\}$ is strongly closed and convex, hence weakly
closed; weak closedness of all sublevel sets is equivalent to weak sequential lower
semicontinuity.
\end{proof}

\subsection{The box normal cone in $L^2$}
For closed convex $K$ in a Hilbert space $H$,
$N_K(\bar u):=\{\lambda:\ \inner{\lambda}{v-\bar u}_H\le0\ \forall v\in K\}.$

\begin{lemma}[Normal cone to the box]
\label{lem:box-normal}
Let $\bar u\in\Uad$. Then $\lambda\in N_{\Uad}(\bar u)$ iff, componentwise and a.e.,
$\lambda_i\le0$ on $\{\bar u_i=u_{a,i}\}$, $\lambda_i=0$ on
$\{u_{a,i}<\bar u_i<u_{b,i}\}$, $\lambda_i\ge0$ on $\{\bar u_i=u_{b,i}\}$.
Equivalently, with $a:=\bar u-u_a$, $b:=u_b-\bar u$, $\lambda=\lambda^b-\lambda^a$,
$\lambda^a,\lambda^b\ge0$, one has $a\lambda^a=0$, $b\lambda^b=0$ a.e.\ componentwise.
\end{lemma}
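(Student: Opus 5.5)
We prove the two directions of the pointwise characterization separately and then translate the sign conditions into the complementarity form.

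\emph{Sufficiency.} Suppose $\lambda$ satisfies the sign conditions a.e.\ componentwise, and let $v\in\Uad$. Write
\[
\inner{\lambda}{v-\bar u}_U=\sum_i\int_0^T\lambda_i(t)\bigl(v_i(t)-\bar u_i(t)\bigr)\,dt .
\]
On $\{\bar u_i=u_{a,i}\}$ we have $v_i-\bar u_i\ge0$ and $\lambda_i\le0$, so the integrand is nonpositive. On $\{\bar u_i=u_{b,i}\}$ we have $v_i-\bar u_i\le0$ and $\lambda_i\ge0$, so again the integrand is nonpositive. On the inactive set $\lambda_i=0$. Hence the integrand is a.e.\ nonpositive, and $\lambda\in N_{\Uad}(\bar u)$.

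\emph{Necessity.} Let $\lambda\in N_{\Uad}(\bar u)$. The plan is to test with localized variations built on measurable sets. Fix a component $i$ and a measurable set $E\subset(0,T)$. Put $v:=\bar u+\chi_E(w-\bar u_i)e_i$, where the replacement value $w\in\{u_{a,i},u_{b,i}\}$ is chosen as follows.

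\begin{enumerate}
\item Take $E\subset\{\lambda_i>0\}$ and $w=u_{b,i}$. Then $v\in\Uad$, and the normal-cone inequality gives
\[
\int_E\lambda_i\,(u_{b,i}-\bar u_i)\,dt\le0 .
\]
The integrand is nonnegative, so $\lambda_i(u_{b,i}-\bar u_i)=0$ a.e.\ on $\{\lambda_i>0\}$. Thus $\lambda_i>0$ forces $\bar u_i=u_{b,i}$ a.e.
\item Symmetrically, take $E\subset\{\lambda_i<0\}$ and $w=u_{a,i}$. This gives $\bar u_i=u_{a,i}$ a.e.\ on $\{\lambda_i<0\}$.
\end{enumerate}

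Together these yield all three sign statements, using the standing hypothesis $u_{a,i}\le u_{b,i}$ a.e. Where the bounds coincide, both the lower and upper active conditions hold simultaneously, so $\lambda_i$ is unrestricted. This is consistent with the lemma, because the two active sets then overlap.

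\emph{Complementarity form.} Set $\lambda^b:=\max(\lambda,0)$ and $\lambda^a:=\max(-\lambda,0)$ componentwise. Both are in $L^2$ and nonnegative, and the sign conditions are equivalent to $a\lambda^a=0$ and $b\lambda^b=0$ a.e.

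This decomposition needs a remark when $\lambda$ is written as $\lambda^b-\lambda^a$ with $\lambda^a,\lambda^b\ge0$ not the positive and negative parts. Then both could be positive at the same point only where $a=b=0$, that is, where the bounds coincide, and that causes no harm. The reverse implication is immediate: $a\lambda^a=0$ and $b\lambda^b=0$ give the sign pattern directly.

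\emph{Main technical point.} The only delicate step is the measurability and admissibility of the test functions $v$. Since $u_a,u_b\in L^\infty$, $v\in L^2$ and satisfies the box constraints, so $v\in\Uad$. The passage from the integral inequality on every measurable $E$ to a pointwise a.e.\ statement is the standard Lebesgue argument: if $\int_E g\le0$ for every measurable $E\subset\{g\ge0\}$, then $g=0$ a.e.\ there.
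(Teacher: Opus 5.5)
Your localization argument is the rigorous version of the paper's sketch. The paper perturbs $\bar u_i(t)$ pointwise and then says that integrating yields the $L^2$ statement. You instead realize the perturbations by the admissible functions $\bar u+\chi_E(w-\bar u_i)e_i$, which is what actually justifies passing from $\lambda\in N_{\Uad}(\bar u)$ to a.e.\ information. Your sufficiency direction is correct, and so are the two implications $\lambda_i>0\Rightarrow\bar u_i=u_{b,i}$ and $\lambda_i<0\Rightarrow\bar u_i=u_{a,i}$ a.e.

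The step that fails is your treatment of $Z_i:=\{u_{a,i}=u_{b,i}\}$. Your items 1 and 2 give $\lambda_i\le0$ a.e.\ on $\{\bar u_i=u_{a,i}\}\setminus Z_i$ and $\lambda_i\ge0$ a.e.\ on $\{\bar u_i=u_{b,i}\}\setminus Z_i$, with $\lambda_i$ free on $Z_i$. That last fact is \emph{not} consistent with the lemma. A point of $Z_i$ lies in both active sets, so the stated conditions demand both $\lambda_i\le0$ and $\lambda_i\ge0$ there, i.e.\ $\lambda_i=0$; the overlap strengthens the condition rather than relaxing it. For example, if $u_a=u_b=0$ then $\Uad=\{0\}$ and $N_{\Uad}(0)=U$, while the sign form admits only $\lambda=0$.

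For the same reason, the sign pattern is not equivalent to $a\lambda^a=0$, $b\lambda^b=0$ on $Z_i$: there $a=b=0$ allows $\lambda^b>0$ on $\{\bar u_i=u_{a,i}\}$. So your claim that ``the reverse implication is immediate'' also fails. The only-if half of the sign characterization, and its asserted equivalence with the complementarity form, hold only when $\operatorname{meas}(Z_i)=0$. Under the standing hypothesis $u_a\le u_b$ a.e., the lemma as literally stated is false. The paper's own sketch shares this blind spot: at a point of $Z_i$ no nonzero perturbation is admissible, so nothing is forced.

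Rather than declaring consistency, make one of two corrections:
\begin{itemize}
\item add the hypothesis $u_a<u_b$ a.e.\ (satisfied in Appendix~\ref{app:scalar-sc}, where $u_a<0<u_b$); or
\item impose the sign conditions only on $\{\bar u_i=u_{a,i}<u_{b,i}\}$ and $\{u_{a,i}<\bar u_i=u_{b,i}\}$, with $\lambda_i$ free on $Z_i$.
\end{itemize}
With either correction your argument is complete. The complementarity form, read as the existence of some decomposition $\lambda=\lambda^b-\lambda^a$ with $\lambda^a,\lambda^b\ge0$, is correct in full generality. It follows directly from your items 1 and 2 (take $\lambda^b=\lambda^+$, $\lambda^a=\lambda^-$), without passing through the sign form. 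The converse follows from the same integrand-sign argument you used for sufficiency.
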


\begin{proof}
Pointwise: if $\bar u_i(t)$ is interior, both signs of perturbation are admissible,
forcing $\lambda_i(t)=0$; at the lower bound only nonnegative perturbations are
admissible, forcing $\lambda_i(t)\le0$; at the upper bound only nonpositive ones,
forcing $\lambda_i(t)\ge0$. Integrating gives the Hilbert-space characterization. The
complementarity form follows by setting $\lambda=\lambda^b-\lambda^a$.
\end{proof}

\section{MPCC Stationarity Background}
\label{app:mpcc-background}
We recall the stationarity notions for MPCCs, in the pointwise form relevant to box
control constraints, following
\cite{scheel2000mathematical,scholtes2001convergence,luo1996mathematical,hoheisel2013theoretical,gao2022rolling}.

\subsection{Prototype complementarity structure}
A finite-dimensional MPCC contains
$0\le G_i(z)\perp H_i(z)\ge0$, $i=1,\dots,q$, i.e.\ $G_i\ge0$, $H_i\ge0$,
$G_iH_i=0$. For the control box the two pairs are
$a=u-u_a\ge0$, $\lambda^a\ge0$, $a\lambda^a=0$ and
$b=u_b-u\ge0$, $\lambda^b\ge0$, $b\lambda^b=0$, with index sets at a feasible point
\[
  I^{+0}:=\{G>0,H=0\},\quad I^{0+}:=\{G=0,H>0\},\quad I^{00}:=\{G=0,H=0\},
\]
understood a.e.\ and componentwise.

\subsection{Stationarity hierarchy}
Let $(\alpha,\beta)$ denote the multipliers of a complementarity pair $(G,H)$, with
the remaining smooth constraints suppressed.

\begin{definition}[Weak stationarity]
\label{def:weak-stationarity-app}
Lagrangian stationarity holds and $\alpha=0$ on $I^{+0}$, $\beta=0$ on $I^{0+}$; no
sign condition on $I^{00}$.
\end{definition}
\begin{definition}[C-stationarity]
\label{def:c-stationarity-app}
Weakly stationary and, in addition, $\alpha_i\beta_i\ge0$ on $I^{00}$ (a.e.,
componentwise in the function-space setting).
\end{definition}
\begin{definition}[M-stationarity]
\label{def:m-stationarity-app}
Weakly stationary and, for every $i\in I^{00}$, either $\alpha_i\beta_i=0$ or
$\alpha_i>0,\beta_i>0$.
\end{definition}
\begin{definition}[Strong stationarity]
\label{def:strong-stationarity-app}
Weakly stationary and, for every $i\in I^{00}$, $\alpha_i\ge0$ and $\beta_i\ge0$.
\end{definition}

These satisfy
\[
  \text{strong}\Rightarrow\text{M}\Rightarrow\text{C}\Rightarrow\text{weak},
\]
with strict implications in general.

\subsection{Relation to the box-control MPCC}
The MPCC degeneracy for the box control occurs where a control is at a bound and the
corresponding bound multiplier vanishes -- the biactive set
$\mathcal B=\{t\in I^a(\bar u)\cup I^b(\bar u):\bar\lambda(t)=0\}$. The conditions of
Definition~\ref{def:cstationarity} are the function-space analogue of
Definition~\ref{def:c-stationarity-app}: $\bar\xi=0$ on $I^0(\bar u)$, $\bar w=0$ on
$\{\bar\lambda\neq0\}$, and $\bar\xi\bar w\ge0$ on $\mathcal B$.

\subsection{Why C-stationarity is the natural relaxation limit}
Scholtes regularization replaces $G_iH_i=0$ by $G_iH_i\le\varepsilon$. For each fixed
$\varepsilon>0$ the relaxed problem is an ordinary NLP, and standard KKT theory
applies under a usual CQ. As $\varepsilon\downarrow0$, multiplier limits generally
satisfy only the Clarke sign condition on the biactive set
\cite{scholtes2001convergence,hoheisel2013theoretical}; hence C-stationarity is the robust notion obtained
without further structure. Strong stationarity requires stronger assumptions, e.g.\
absence of biactive indices, strict complementarity, or MPCC-tailored CQs. This is
why Theorem~\ref{thm:cstationarity} asserts C-stationarity for relaxation limits
under a uniform multiplier bound, while Theorem~\ref{thm:scalar-sc} gives a separate
structural condition under which the biactive set has measure zero and strict
complementarity holds a.e.

\section{Strict Complementarity in the Decoupled Scalar Case}
\label{app:scalar-sc}
This appendix proves the strict-complementarity assertion used in
Section~\ref{subsec:scalar-sc}. The result is deliberately stated in the
almost-everywhere sense appropriate for box constraints in $L^2$: isolated
switching-threshold contacts are invisible to the $L^2$-normal cone, and hence do
not obstruct the MPCC qualification at the level of Lebesgue-space complementarity.

\subsection{Scalar constrained LQ problem}
Fix
\[
  a\in\R,\qquad b\neq0,\qquad c\ge0,\qquad \sigma>0,\qquad u_a<0<u_b,
\]
and consider the scalar constrained linear-quadratic problem
\begin{equation}
\tag{SP}\label{eq:scalar-problem-app}
  \min_{u\in\Uad}
  \int_0^T\left(cy(t)^2+\frac{\sigma}{2}u(t)^2\right)\,dt,
  \qquad
  \dot y=ay+bu,\quad y(0)=y_0,
\end{equation}
where
\[
  \Uad:=\{u\in L^2(0,T): u_a\le u(t)\le u_b\ \text{for a.e. }t\in(0,T)\}.
\]
The admissible set is nonempty, closed, convex, and bounded in $L^2(0,T)$, and the
reduced objective is strongly convex because $\sigma>0$. Hence
\eqref{eq:scalar-problem-app} has a unique optimal control, denoted $\bar u$, and a
unique associated state, denoted $\bar y$.

With Hamiltonian
\[
  H(y,u,p):=cy^2+\frac{\sigma}{2}u^2+p(ay+bu),
\]
the first-order optimality system is
\begin{equation}
\label{eq:scalar-os-app}
  \dot{\bar y}=a\bar y+b\bar u,\qquad \bar y(0)=y_0,
  \qquad
  \dot p=-ap-2c\bar y,\qquad p(T)=0,
\end{equation}
together with the pointwise variational inequality
\begin{equation}
\label{eq:scalar-vi-app}
  \bigl(\sigma\bar u(t)+bp(t)\bigr)\bigl(v-\bar u(t)\bigr)\ge0
  \qquad
  \forall v\in[u_a,u_b],
  \quad\text{for a.e. }t\in(0,T).
\end{equation}
Equivalently,
\begin{equation}
\label{eq:scalar-projection-app}
  \bar u(t)=\proj_{[u_a,u_b]}(\rho(t)),
  \qquad
  \rho(t):=-\frac b\sigma p(t).
\end{equation}
The conditions \eqref{eq:scalar-os-app}--\eqref{eq:scalar-projection-app} are not
only necessary but also sufficient, since the problem is convex and the reduced
objective is strongly convex.

We call $\rho$ the switching function. The active-bound regions are
\[
  \{\bar u=u_b\}=\{\rho\ge u_b\},
  \qquad
  \{\bar u=u_a\}=\{\rho\le u_a\},
\]
where the equalities are understood pointwise after taking the continuous
representatives established below. The set
\[
  \mathcal T:=\{\rho=u_a\}\cup\{\rho=u_b\}
\]
is called the threshold-contact set. It is this set, not the full active-bound
region, that can give rise to biactivity.

\subsection{Regularity of the switching function}
\begin{lemma}[Regularity bootstrap]
\label{lem:scalar-C2-app}
The optimality system has the following regularity:
\[
  \bar u\in C([0,T]),\qquad
  \bar y\in C^1([0,T]),\qquad
  p\in C^2([0,T]),\qquad
  \rho\in C^2([0,T]).
\]
At the endpoints the derivatives are understood one-sidedly.
\end{lemma}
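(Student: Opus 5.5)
The plan is a bootstrap that alternates between the linear ODEs in \eqref{eq:scalar-os-app} and the projection formula \eqref{eq:scalar-projection-app}, gaining one order of regularity at each pass.

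\emph{Starting regularity.} Since $\bar u\in\Uad\subset L^\infty(0,T)\subset L^2(0,T)$, Lemma~\ref{lem:control-to-state} (in the scalar case $d=m=1$) gives $\bar y\in H^1(0,T)$. By the Sobolev embedding in one dimension, $H^1(0,T)\hookrightarrow C([0,T])$, so $\bar y$ is continuous. The adjoint equation $\dot p=-ap-2c\bar y$, $p(T)=0$, is a linear ODE with continuous right-hand side. Solving it backward by variation of constants,
\[
  p(t)=2c\int_t^T e^{a(s-t)}\bar y(s)\,ds,
\]
shows $p\in C^1([0,T])$. Consequently $\rho=-(b/\sigma)p\in C^1([0,T])$.

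\emph{Regularity of the control.} The map $\proj_{[u_a,u_b]}:\R\to\R$ is $1$-Lipschitz. Composing it with the continuous function $\rho$ gives a continuous representative of $\bar u$, so $\bar u\in C([0,T])$. Here $u_a,u_b$ are the scalar constants fixed at the start of this appendix, so no issue of time-dependent bounds arises. I will state explicitly that from now on we work with this representative; \eqref{eq:scalar-projection-app} holds a.e., and we are entitled to redefine $\bar u$ on a null set.

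\emph{Upgrading state and costate.} With $\bar u$ continuous, the state equation $\dot{\bar y}=a\bar y+b\bar u$ has a continuous right-hand side, so the variation-of-constants formula gives $\bar y\in C^1([0,T])$, with the derivative identity holding everywhere. Feeding this back into the adjoint equation, the right-hand side $-ap-2c\bar y$ is now $C^1$, and differentiating the equation gives $\ddot p=-a\dot p-2c\dot{\bar y}$, which is continuous. Hence $p\in C^2([0,T])$ and $\rho\in C^2([0,T])$. At $t=0$ and $t=T$ all derivatives are one-sided limits of the difference quotients; this is automatic, since the integral formulas are defined on the closed interval.

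\emph{Expected sticking point.} The bootstrap stops at this point because $\bar u$ is in general only Lipschitz: it has kinks where $\rho$ crosses a threshold. So nothing beyond $C^2$ for $\rho$ follows from this argument. The only step needing care is the choice of representative. That choice is what makes the sets $\{\rho\ge u_b\}$ and $\{\rho\le u_a\}$ equal to $\{\bar u=u_b\}$ and $\{\bar u=u_a\}$ pointwise rather than merely a.e., and I will remark on this explicitly in the proof.
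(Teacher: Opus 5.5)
Your proof is correct and follows the same bootstrap as the paper: ODE regularity for $\bar y$ and $p$, then continuity of $\bar u$ from the Lipschitz projection applied to $\rho$, then one more pass through the state and adjoint equations to reach $p,\rho\in C^2([0,T])$. The only difference is cosmetic: you read off $p\in C^1([0,T])$ at the first pass from $\bar y\in C([0,T])$ via the explicit backward variation-of-constants formula, whereas the paper records only $p\in H^1(0,T)\hookrightarrow C([0,T])$ at that stage and upgrades it afterwards.
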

\begin{proof}
Since $\bar u\in L^2(0,T)$, the state equation gives $\bar y\in H^1(0,T)$. The
adjoint equation then gives $p\in H^1(0,T)$. In one dimension,
$H^1(0,T)\hookrightarrow C([0,T])$, so $p$ is continuous, and therefore
$\rho=-(b/\sigma)p$ is continuous. Since the projection onto $[u_a,u_b]$ is Lipschitz,
\eqref{eq:scalar-projection-app} implies $\bar u\in C([0,T])$.

Now the state equation has a continuous right-hand side $a\bar y+b\bar u\in C([0,T])$,
hence $\bar y\in C^1([0,T])$. Similarly, since $p,\bar y\in C([0,T])$, the adjoint
equation gives $\dot p=-ap-2c\bar y\in C([0,T])$, so $p\in C^1([0,T])$. Using now
$p,\bar y\in C^1([0,T])$, we obtain $-ap-2c\bar y\in C^1([0,T])$, and consequently
$p\in C^2([0,T])$. Thus $\rho=-\tfrac b\sigma p\in C^2([0,T])$.
\end{proof}

\subsection{No accumulation of threshold contacts}
\begin{lemma}[No accumulation of threshold contacts]
\label{lem:scalar-noaccum-app}
If $c>0$, neither $\{\rho=u_b\}$ nor $\{\rho=u_a\}$ has an accumulation point in
$[0,T]$. If $c=0$, then $\rho\equiv0$, and therefore
$\{\rho=u_a\}\cup\{\rho=u_b\}=\varnothing$.
\end{lemma}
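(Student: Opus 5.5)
The case $c=0$ is immediate, and for $c>0$ I will argue by contradiction at an accumulation point, using a twofold Rolle argument together with the explicit ODE for $\rho$.

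\emph{Case $c=0$.} The adjoint equation in \eqref{eq:scalar-os-app} becomes the homogeneous linear problem $\dot p=-ap$, $p(T)=0$. By uniqueness for linear ODEs, $p\equiv0$, hence $\rho=-(b/\sigma)p\equiv0$. Since $u_a<0<u_b$, the value $0$ equals neither threshold, so $\{\rho=u_a\}\cup\{\rho=u_b\}=\varnothing$.

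\emph{Case $c>0$: an ODE for $\rho$.} By Lemma~\ref{lem:scalar-C2-app}, $\rho\in C^2([0,T])$, $\bar y\in C^1([0,T])$ and $\bar u\in C([0,T])$, so all identities below hold pointwise. Differentiating $\rho=-(b/\sigma)p$ and using $\dot p=-ap-2c\bar y$ gives
\[
  \dot\rho=-a\rho+\frac{2bc}{\sigma}\,\bar y .
\]
Differentiating once more and inserting the state equation gives
\[
  \ddot\rho=-a\dot\rho+\frac{2bc}{\sigma}\bigl(a\bar y+b\bar u\bigr).
\]

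\emph{Case $c>0$: the Rolle step.} Suppose $t^*\in[0,T]$ is an accumulation point of $\{\rho=u_b\}$; the lower threshold $u_a$ is handled identically. Choose distinct contact points $t_k\to t^*$, monotone and from one side, which also covers $t^*\in\{0,T\}$.
\begin{itemize}
\item By continuity, $\rho(t^*)=u_b$.
\item Rolle's theorem on consecutive $t_k$ gives zeros $s_k$ of $\dot\rho$ lying between them, with $s_k\to t^*$. Continuity of $\dot\rho$, one-sided at the endpoints, gives $\dot\rho(t^*)=0$.
\item Rolle's theorem applied to $\dot\rho$ on consecutive $s_k$ gives zeros of $\ddot\rho$ accumulating at $t^*$. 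Since $\ddot\rho$ is continuous, $\ddot\rho(t^*)=0$.
\end{itemize}
I expect this to be the main point needing care: the accumulating zeros must be taken distinct and on one side of $t^*$, so that Rolle's theorem applies on the intervening intervals. $C^2$-regularity of $\rho$ is exactly what makes the second step legitimate.

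\emph{Case $c>0$: the contradiction.} Evaluate the three conditions at $t^*$.
\begin{itemize}
\item From \eqref{eq:scalar-projection-app}, $\rho(t^*)=u_b$ gives $\bar u(t^*)=u_b$.
\item The condition $\dot\rho(t^*)=0$ gives $\bar y(t^*)=a\sigma u_b/(2bc)$; this uses $b\neq0$ and $c>0$.
\item Substituting these values into $\ddot\rho(t^*)=0$ gives
\[
  0=\frac{2bc}{\sigma}\Bigl(\frac{a^2\sigma u_b}{2bc}+bu_b\Bigr)
  =\frac{u_b}{\sigma}\bigl(a^2\sigma+2b^2c\bigr).
\]
\end{itemize}
But $u_b\neq0$ and $a^2\sigma+2b^2c\ge2b^2c>0$, so the right-hand side is nonzero, a contradiction. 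For $u_a$ the same computation yields $u_a(a^2\sigma+2b^2c)/\sigma=0$, contradicting $u_a\neq0$. Hence neither contact set has an accumulation point in $[0,T]$.
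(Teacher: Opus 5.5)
Your proposal is correct and follows essentially the same route as the paper. The $c=0$ case and the twofold Rolle argument yielding $\rho(t^*)=u_b$, $\dot\rho(t^*)=0$, $\ddot\rho(t^*)=0$ match, and the contradiction $u_b(a^2\sigma+2b^2c)=0$ is the same. The only difference is cosmetic: you derive a second-order ODE for $\rho$ directly, whereas the paper translates the three conditions to $p$ and differentiates the adjoint equation to obtain $\dot{\bar y}(t^*)=0$.
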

\begin{proof}
If $c=0$, the adjoint equation reduces to $\dot p=-ap$, $p(T)=0$, so $p\equiv0$ and
$\rho\equiv0$. Since $u_a<0<u_b$, $\rho(t)\in(u_a,u_b)$ for every $t\in[0,T]$, and
there are no threshold contacts.

Assume $c>0$. We treat the upper threshold; the lower is identical. Suppose
$\{\rho=u_b\}$ has an accumulation point $t^*\in[0,T]$. Set $g:=\rho-u_b\in
C^2([0,T])$ (Lemma~\ref{lem:scalar-C2-app}). There are distinct $\tau_k\to t^*$ with
$g(\tau_k)=0$, so by continuity $g(t^*)=0$. If $t^*\in(0,T)$, Rolle between
consecutive zeros gives $\eta_k\to t^*$ with $\dot g(\eta_k)=0$, hence $\dot
g(t^*)=0$; a second application to $\dot g$ gives $\zeta_k\to t^*$ with $\ddot
g(\zeta_k)=0$, hence $\ddot g(t^*)=0$. If $t^*\in\{0,T\}$, the same conclusion holds
with one-sided derivatives: accumulating zeros at the endpoint force the
corresponding one-sided first derivative of $g$ to vanish, and Rolle between
consecutive zeros produces interior zeros of $\dot g$ accumulating at the endpoint,
forcing the corresponding one-sided second derivative to vanish. Thus in all cases
\[
  \rho(t^*)=u_b,\qquad \dot\rho(t^*)=0,\qquad \ddot\rho(t^*)=0,
\]
with one-sided interpretation at the endpoints. Since $\rho=-(b/\sigma)p$ and
$b\neq0$,
\[
  p(t^*)=P_b:=-\frac{\sigma u_b}{b},\qquad \dot p(t^*)=0,\qquad \ddot p(t^*)=0.
\]
Evaluating the adjoint equation at $t^*$ gives $0=\dot p(t^*)=-ap(t^*)-2c\bar
y(t^*)$, hence
\begin{equation}
\label{eq:scalar-y-star-app}
  \bar y(t^*)=-\frac{aP_b}{2c}=\frac{a\sigma u_b}{2bc}.
\end{equation}
Differentiating the adjoint equation, $\ddot p=-a\dot p-2c\dot{\bar y}$; using $\dot
p(t^*)=\ddot p(t^*)=0$,
\begin{equation}
\label{eq:scalar-ydot-zero-app}
  \dot{\bar y}(t^*)=0.
\end{equation}
On the other hand, $\rho(t^*)=u_b$ and \eqref{eq:scalar-projection-app} give $\bar
u(t^*)=u_b$, so by the state equation and \eqref{eq:scalar-y-star-app},
\[
  \dot{\bar y}(t^*)=a\bar y(t^*)+b\bar u(t^*)
  =a\frac{a\sigma u_b}{2bc}+bu_b=u_b\,\frac{a^2\sigma+2b^2c}{2bc}.
\]
Combined with \eqref{eq:scalar-ydot-zero-app} this forces
$u_b\,(a^2\sigma+2b^2c)/(2bc)=0$. Since $u_b>0$, $b\neq0$, $c>0$, $\sigma>0$, we get
$a^2\sigma+2b^2c=0$, impossible because $a^2\sigma\ge0$ and $2b^2c>0$. Hence
$\{\rho=u_b\}$ has no accumulation point. The lower-threshold case is identical,
replacing $u_b$ by $u_a$; since $u_a<0$, in particular $u_a\neq0$, the same
contradiction is obtained.
\end{proof}

\subsection{Finiteness and almost-everywhere strict complementarity}
We now fix the correct multiplier convention. The variational inequality
\eqref{eq:scalar-vi-app} is equivalent to
\[
  0\in \sigma\bar u(t)+bp(t)+N_{[u_a,u_b]}(\bar u(t))
  \qquad\text{for a.e. }t\in(0,T),
\]
where
\[
  N_{[u_a,u_b]}(s)=
  \begin{cases}
    (-\infty,0], & s=u_a,\\
    \{0\},       & u_a<s<u_b,\\
    [0,\infty),  & s=u_b.
  \end{cases}
\]
Thus, with the normal-cone sign convention of the main text, the scalar box
multiplier is
\begin{equation}
\label{eq:scalar-normal-mult-app}
  \lambda_N(t):=-\bigl(\sigma\bar u(t)+bp(t)\bigr)=\sigma\bigl(\rho(t)-\bar u(t)\bigr),
\end{equation}
so that
\[
  \lambda_N\le0\ \text{on }\{\bar u=u_a\},\qquad
  \lambda_N=0\ \text{on }\{u_a<\bar u<u_b\},\qquad
  \lambda_N\ge0\ \text{on }\{\bar u=u_b\}.
\]
In the notation of the main text \eqref{eq:cc}, $\lambda_N=\lambda^b-\lambda^a$ with
$\lambda^a:=(\lambda_N)^-\ge0$ and $\lambda^b:=(\lambda_N)^+\ge0$, so $\lambda_N$ is
exactly the scalar instance of the box normal-cone multiplier $\lambda$ of
\eqref{eq:kkt}. The biactive set is
\[
  \mathcal B:=\{t\in[0,T]:\bar u(t)\in\{u_a,u_b\},\ \lambda_N(t)=0\}.
\]

\begin{theorem}[Unconditional finiteness and a.e. strict complementarity]
\label{thm:scalar-sc-app}
For \eqref{eq:scalar-problem-app} with $a\in\R$, $b\neq0$, $c\ge0$, $\sigma>0$,
$u_a<0<u_b$, the threshold-contact set $\mathcal T=\{\rho=u_a\}\cup\{\rho=u_b\}$ is
finite. The control can enter or leave a bound only at threshold-contact times (an
active arc $\{\bar u=u_b\}=\{\rho\ge u_b\}$ may itself be an interval of positive
measure). Consequently $\mathcal B\subset\mathcal T$ is finite, and therefore
$\operatorname{meas}(\mathcal B)=0$ (Figure~\ref{fig:switching_function}); equivalently, strict complementarity holds almost everywhere in the $L^2$-normal-cone sense. No additional nondegeneracy condition is required.
\end{theorem}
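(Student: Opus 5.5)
The plan is to derive finiteness of $\mathcal T$ directly from Lemma~\ref{lem:scalar-noaccum-app} by compactness, and then to locate the biactive set inside $\mathcal T$ using the explicit multiplier formula \eqref{eq:scalar-normal-mult-app}.

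\emph{Step 1: $\mathcal T$ is finite.} If $c=0$, Lemma~\ref{lem:scalar-noaccum-app} gives $\mathcal T=\varnothing$ and there is nothing to prove. If $c>0$, the sets $\{\rho=u_a\}$ and $\{\rho=u_b\}$ are closed subsets of $[0,T]$, since $\rho\in C^2([0,T])$ by Lemma~\ref{lem:scalar-C2-app}. By Lemma~\ref{lem:scalar-noaccum-app} neither set has an accumulation point in $[0,T]$. An infinite subset of the compact interval $[0,T]$ would have an accumulation point by Bolzano--Weierstrass, so each of the two sets is finite, and hence so is $\mathcal T$.

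\emph{Step 2: entry and exit happen only on $\mathcal T$.} I will work with the continuous representatives $\bar u=\proj_{[u_a,u_b]}(\rho)$, so that $\{\bar u=u_b\}=\{\rho\ge u_b\}$ and $\{\bar u=u_a\}=\{\rho\le u_a\}$. Suppose $t$ is a boundary point (relative to $[0,T]$) of $\{\rho\ge u_b\}$. Then $\rho(t)\ge u_b$ by closedness, and $\rho(t)\le u_b$ because every neighborhood of $t$ contains points where $\rho<u_b$ and $\rho$ is continuous. Hence $\rho(t)=u_b$, i.e.\ $t\in\mathcal T$. The lower bound is handled the same way. Consequently the active set is a finite union of closed intervals (possibly degenerate) with endpoints in $\mathcal T\cup\{0,T\}$. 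This gives the parenthetical claim that active arcs may have positive measure while switching is confined to $\mathcal T$.

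\emph{Step 3: $\mathcal B\subset\mathcal T$.} Take $t\in\mathcal B$ and assume $\bar u(t)=u_b$. Then $\rho(t)\ge u_b$, and by \eqref{eq:scalar-normal-mult-app}
\[
0=\lambda_N(t)=\sigma\bigl(\rho(t)-u_b\bigr),
\]
so $\rho(t)=u_b$. The case $\bar u(t)=u_a$ gives $\rho(t)=u_a$ in the same way. Thus $\mathcal B\subset\mathcal T$, so $\mathcal B$ is finite and $\operatorname{meas}(\mathcal B)=0$. On the complement of a null set, every active time therefore has $\lambda_N\neq0$ with the correct sign, which is strict complementarity a.e.\ for the $L^2$ normal cone. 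By Lemma~\ref{lem:box-normal}, changing $\lambda_N$ on a null set does not alter membership in $N_{\Uad}(\bar u)$. No nondegeneracy hypothesis enters anywhere in this argument.

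\emph{Main obstacle.} The genuine difficulty is the no-accumulation statement, in particular the endpoint case $t^*\in\{0,T\}$ with one-sided derivatives, but that is already contained in Lemma~\ref{lem:scalar-noaccum-app}. For the present theorem the only delicate point is representative-dependence. The sets $\mathcal T$ and $\mathcal B$ must be read for the continuous representatives of $\bar u$, $\rho$ and $\lambda_N$. For any other representative they change only by a null set, and this does not affect the a.e.\ conclusion.
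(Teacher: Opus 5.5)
Your proposal is correct and follows essentially the same route as the paper: finiteness of $\mathcal T$ comes from Lemma~\ref{lem:scalar-noaccum-app} together with Bolzano--Weierstrass, and $\mathcal B\subset\mathcal T$ comes from $\lambda_N=\sigma(\rho-u_b)$ on $\{\bar u=u_b\}$ (resp.\ $\lambda_N=\sigma(\rho-u_a)$ on $\{\bar u=u_a\}$). Your Step~2 shows that relative boundary points of $\{\rho\ge u_b\}$ lie in $\{\rho=u_b\}$, so each active set is a finite union of closed intervals with endpoints in $\mathcal T\cup\{0,T\}$; this is a small but useful addition, since the paper asserts the entry/exit claim without a separate argument.
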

\begin{proof}
If $c=0$, Lemma~\ref{lem:scalar-noaccum-app} gives $\rho\equiv0\in(u_a,u_b)$, so
$\mathcal T=\varnothing$. Let $c>0$. By Lemma~\ref{lem:scalar-noaccum-app}, neither
$\{\rho=u_a\}$ nor $\{\rho=u_b\}$ has an accumulation point in $[0,T]$. Since $[0,T]$
is compact, any infinite subset has an accumulation point (Bolzano--Weierstrass);
hence both sets are finite, so $\mathcal T$ is finite.

It remains to relate $\mathcal T$ to biactivity. By the projection formula
$\bar u(t)=\proj_{[u_a,u_b]}(\rho(t))$. If $u_a<\bar u(t)<u_b$, then $\bar
u(t)=\rho(t)$ and $\lambda_N(t)=0$, but no bound is active. If $\bar u(t)=u_b$, then
$\rho(t)\ge u_b$ and \eqref{eq:scalar-normal-mult-app} gives
$\lambda_N(t)=\sigma(\rho(t)-u_b)$, so $\lambda_N(t)=0$ iff $\rho(t)=u_b$. Likewise,
if $\bar u(t)=u_a$, then $\rho(t)\le u_a$ and $\lambda_N(t)=\sigma(\rho(t)-u_a)$, so
$\lambda_N(t)=0$ iff $\rho(t)=u_a$. Therefore $\mathcal B\subset\{\rho=u_a\}\cup
\{\rho=u_b\}=\mathcal T$. Since $\mathcal T$ is finite, $\mathcal B$ is finite, so
$\operatorname{meas}(\mathcal B)=0$, which is precisely a.e.\ strict complementarity
for the $L^2$-box normal cone.
\end{proof}

\begin{figure}
    \centering
   \includegraphics[width=0.8\textwidth]{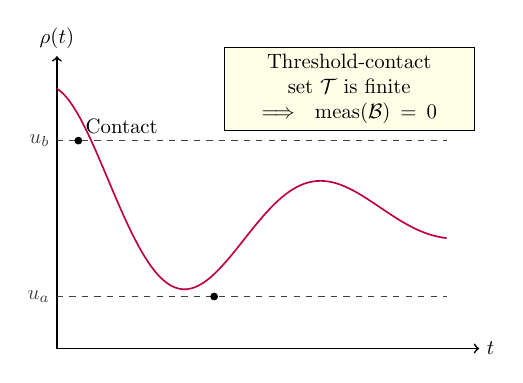}
    \caption{Illustration of the switching function $\rho(t)$ over time. In the decoupled scalar setting, the curve exhibits only a finite number of contacts with the upper and lower control thresholds, $u_b$ and $u_a$. Consequently, the threshold-contact set $\mathcal{T}$ is finite, which implies that the biactive set $\mathcal{B}$ has Lebesgue measure zero ($\operatorname{meas}(\mathcal{B})=0$). This guarantees that strict complementarity holds almost everywhere.}
    \label{fig:switching_function}
\end{figure}

\begin{corollary}[Decoupled two-agent case]
\label{cor:decoupled-two-agent-sc-app}
In the two-agent model of Section~\ref{subsec:two-agent} with $x_1=0$, the
lower-level problem separates into two independent scalar constrained LQ problems.
For agent $i$, the scalar running state weight matching the normalization of
\eqref{eq:scalar-problem-app} is
\[
  c_i^{\mathrm{sc}}=\tfrac12 x_2 c_i
\]
(equivalently, the factor $1/2$ may be absorbed into the scalar coefficient). Suppose
$b_i\neq0$, $c_i\ge0$, $\sigma>0$, $u_{a,i}<0<u_{b,i}$, $i=1,2$. Then each scalar
threshold-contact set is finite; consequently the biactive set of the two-agent
box-control system has Lebesgue measure zero, and strict complementarity holds almost
everywhere for the decoupled two-agent lower-level problem. The boundary case $x_2=0$
(so $c_i^{\mathrm{sc}}=0$) is included: by Theorem~\ref{thm:scalar-sc-app} with $c=0$
one has $p\equiv0$, $\rho\equiv0\in(u_{a,i},u_{b,i})$, no threshold contact occurs,
and $\mathcal B=\varnothing$.

In particular, the complementarity degeneracy associated with vanishing bound
multipliers is absent up to a null set. Thus the strict-complementarity component of
the MPCC--MFCQ verification used in Theorem~\ref{thm:cstationarity} is automatic in
this decoupled scalar setting; the remaining qualification reduces to the standard
smooth equality-constraint part of the state--adjoint system and the
finite-dimensional parameter constraints.
\end{corollary}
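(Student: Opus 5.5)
The plan is to reduce the corollary to Theorem~\ref{thm:scalar-sc-app}, applied to each agent separately. First I will verify that the lower level separates at $x_1=0$. The functional is $x_2 j_2(y)+\tfrac\sigma2\norm{u}_U^2$, with $j_2(y)=\tfrac12\int_0^T(c_1y_1^2+c_2y_2^2)\,dt$. The dynamics are diagonal, $\Aop=\operatorname{diag}(a_1,a_2)$ and $\Bop=\operatorname{diag}(b_1,b_2)$, and the box $\Uad$ is a componentwise product, so it factors as $\Uad^{(1)}\times\Uad^{(2)}$ with scalar boxes $[u_{a,i},u_{b,i}]$. The state of agent $i$ depends only on $u_i$, so $\Phi_x(u)=\Phi^{(1)}(u_1)+\Phi^{(2)}(u_2)$, where each $\Phi^{(i)}$ is the reduced scalar functional of \eqref{eq:scalar-problem-app}. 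Its data are $a=a_i$, $b=b_i$, running weight $c=c_i^{\mathrm{sc}}=\tfrac12x_2c_i\ge0$, and initial datum $y_{0,i}$. Minimizing a sum of functions of independent variables over a product set is equivalent to minimizing each term separately. Together with the uniqueness from Theorem~\ref{thm:lower-existence-uniqueness}, this shows that the unique minimizer $u_x$ has components $\bar u_i$, each the unique optimal control of its scalar problem.

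Next I will identify the multipliers. Lemma~\ref{lem:box-normal} characterizes the box normal cone componentwise. Combined with the uniqueness of $p$ in Lemma~\ref{lem:kkt-equiv}, this shows that the components of the two-agent lower-level multiplier $\lambda$ from \eqref{eq:kkt} coincide a.e.\ with the scalar multipliers $\lambda_N^{(i)}$ of \eqref{eq:scalar-normal-mult-app}. Concretely, the adjoint system also decouples, and the component $p_i$ solves $\dot p_i=-a_ip_i-2c_i^{\mathrm{sc}}\bar y_i$, $p_i(T)=0$. The factor $2$ here, compared with $j_2$'s $\tfrac12$, is exactly what the normalization $c_i^{\mathrm{sc}}$ accounts for. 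I expect this bookkeeping step to need the most care: matching the sign and identification conventions between $\Wsp^*\cong\Wsp$ and the scalar Hamiltonian adjoint. Any constant rescaling of $p_i$ leaves $\lambda$ unchanged, since $\lambda$ is determined by $\lambda=-\sigma u-\ldots$ through \eqref{eq:reduced-mult}. It therefore suffices to check that the reduced gradients agree.

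Then I will apply Theorem~\ref{thm:scalar-sc-app} for each $i$, under the stated hypotheses $b_i\neq0$, $\sigma>0$, $u_{a,i}<0<u_{b,i}$, and $c_i^{\mathrm{sc}}\ge0$. This gives that each $\mathcal T_i$ is finite and that $\mathcal B_i\subset\mathcal T_i$. The biactive set of the two-agent system is understood componentwise, so it is $\mathcal B_1\cup\mathcal B_2$ as a subset of $(0,T)\times\{1,2\}$. It is finite and hence null, which is a.e.\ strict complementarity.

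The boundary case $x_2=0$ (or $c_i=0$) gives $c_i^{\mathrm{sc}}=0$. Lemma~\ref{lem:scalar-noaccum-app} then yields $p_i\equiv0$ and $\rho_i\equiv0\in(u_{a,i},u_{b,i})$, so $\mathcal B_i=\varnothing$. The final remark about the MPCC--MFCQ is interpretive: with $\mathcal B$ null, the index set $I^{00}$ is empty a.e. The C-stationarity sign condition on $\mathcal B$ in Definition~\ref{def:cstationarity} is then vacuous, and only the smooth state–adjoint equality constraints and the finite-dimensional constraint $x\in\Xad$ remain.
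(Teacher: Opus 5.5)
Your proposal is correct and follows the paper's own route. You separate the problem at $x_1=0$ with running weight $c=\tfrac12x_2c_i$, as in Section~\ref{subsec:two-agent}, then apply Theorem~\ref{thm:scalar-sc-app} agent by agent, handling $c_i^{\mathrm{sc}}=0$ by Lemma~\ref{lem:scalar-noaccum-app}.

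Your multiplier-identification step is sound, but the stated justification should be sharpened. Invariance of $\lambda$ under rescaling of $p_i$ is not literally true; what holds is that $\lambda=-\nabla\Phi_x(u_x)$ does not depend on the adjoint convention. Concretely, in the $\Wsp$-convention of \eqref{eq:kkt}, the $L^2$-component of $p$ satisfies $\dot p_{L,i}=-a_ip_{L,i}+2c_i^{\mathrm{sc}}\bar y_i$, $p_{L,i}(T)=0$. So it equals minus your Hamiltonian adjoint, and $\lambda_i=-\sigma\bar u_i+b_ip_{L,i}$ coincides with $\lambda_N^{(i)}$ from \eqref{eq:scalar-normal-mult-app}.
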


\begin{remark}[Why coupling changes the argument]
\label{rem:scalar-coupled-app}
The proof is unconditional because the simultaneous identities $\rho(t^*)=u_b$, $\dot
\rho(t^*)=0$, $\ddot\rho(t^*)=0$ overdetermine the scalar state--costate system; in
the upper-threshold case they force $a^2\sigma+2b^2c=0$, impossible when $c>0$,
$b\neq0$, $\sigma>0$ (the lower-threshold case is identical), while for $c=0$ the
adjoint vanishes identically and no threshold contact occurs. In the coupled
two-agent case $x_1>0$, the same Rolle argument yields relations among several state
and costate components rather than a single scalar algebraic contradiction, so the
system is no longer automatically overdetermined. Hence the decoupled case admits an
unconditional a.e.\ strict-complementarity certificate, whereas the coupled case
generally requires either a separate nondegeneracy assumption or an a posteriori
verification.
\end{remark}

\bibliography{reference}

\end{document}